\renewcommand{\baselinestretch}{1} 
\newcommand{\Z}{\mathbb{Z}}
\newcommand{\C}{\mathcal{C}}
\newcommand{\D}{\mathbb{D}}
\newcommand{\Q}{\mathbb{Q}}
\newcommand{\N}{\mathbb{N}}
\newcommand{\ff}{\mathbb{F}}
\newcommand{\F}{\mathbb{F}}
\newcommand{\sk}{\smallskip}
\newcommand{\Sym}{\mathbb{S}}
\newcommand{\inner}[1]{\mbox{$\langle{#1}\rangle$}}
\newcommand{\Ham}{{\scriptscriptstyle Ham}}
\newtheorem{thm}{Theorem}[section]
\newtheorem{prop}[thm]{Proposition}
\newtheorem{lema}[thm]{Lemma}
\newtheorem{coro}[thm]{Corollary}
\theoremstyle{definition}
\newtheorem{rem}[thm]{Remark}
\newtheorem{exam}[thm]{Example}
\theoremstyle{remark}
\theoremstyle{definition}
\newtheorem{defi}[thm]{Definition}
\newtheorem{ej}[thm]{Example}
\begin{document}
\numberwithin{equation}{section}
\title{Isometries between finite groups}
\author{Ricardo A.\@ Podest\'a, Maximiliano G.\@ Vides}
\keywords{Isometry, metric spaces, finite groups, Gray map, weight enumerator}
\thanks{2010 {\it Mathematics Subject Classification.} Primary 54E45;\, Secondary 20B05, 94B05.}
\thanks{Partially supported by CONICET, FONCyT and SECyT-UNC}

\address{Ricardo A.\@ Podest\'a, FaMAF -- CIEM (CONICET), Universidad Nacional de C\'ordoba, \newline
 Av.\@ Medina Allende 2144, Ciudad Universitaria, (5000) C\'ordoba, Rep\'ublica Argentina. \newline
{\it E-mail: podesta@famaf.unc.edu.ar}}

\address{Maximiliano G.\@ Vides \footnote{Present address:
 Departamento de Matemática, Facultad de Ingenieria Qu\'imica, Universidad Nacional del Litoral. 
Santiago del Estero 2829, (3000) Santa Fe, Rep\'ublica Argentina.}, 
FaMAF -- CIEM (CONICET), Universidad Nacional de C\'ordoba, \newline
 Av.\@ Medina Allende 2144, Ciudad Universitaria, (5000)   C\' ordoba, Rep\'ublica Argentina. \newline
{\it E-mail: mvides@famaf.unc.edu.ar}}

\begin{abstract}
We prove that if $H$ is a subgroup of index $n$ of any cyclic group $G$ 
then $G$ can be isometrically embedded in $(H^n, d_{\Ham}^n)$, thus generalizing previous results of Carlet (1998) for $G=\Z_{2^k}$ and Yildiz and \"Odemi\c{s} \"Ozger (2012) for $G=\Z_{p^k}$ with $p$ prime. 
Next, for any positive integer $q$ we define the $q$-adic metric $d_q$ in $\Z_{q^n}$ and prove that $(\Z_{q^n}, d_q)$ is isometric to $(\Z_q^n, d_{RT})$ for every $n$, where  
$d_{RT}$ is the Rosenbloom--Tsfasman metric.
More generally, we then demonstrate that any pair of finite groups of the same cardinality are isometric to each other for some metrics that can be explicitly constructed. 
Finally, we consider a chain $\C$ of subgroups of a given group and define the chain metric $d_{\C}$ and chain isometries between two chains. Let $G, K$ be groups with 
$|G|=q^n$, $|K|=q$ and let $H<G$. Using chains, we prove that under certain conditions, $(G,d_\C) \simeq (K^n, d_{RT})$ and  $(G,d_\C) \simeq (H^{[G:H]}, d_{BRT})$ where $d_{BRT}$ is the block Rosenbloom--Tsfasman metric which generalizes $d_{RT}$. 
\end{abstract}

\maketitle

\section{Introduction}
\subsubsection*{Historical background}
The Hamming metric $d_{\Ham}$ is the most classic and commonly used metric in coding theory, typically in codes defined over finite fields. Since the 90s, the Lee metric $d_{Lee}$ was also considered on the rings $\Z_m$. 
The Gray map is an isometry between $(\Z_4, d_{Lee})$ and $(\Z_2 \times \Z_2, d_{\Ham})$. 
This map naturally extends to an isometry from $\Z_4^n$ to  $\Z_2^{2n}$.
In a famous paper from 1994, Hammons et al (\cite{HKCSS}) used the Gray isometry to explain the formal duality exhibited by some pairs of binary non-linear codes such as Kerdock and Preparata codes and Goethals and Goethals--Delsarte codes (previously, Nechaev  obtained some similar results in \cite{Ne}). 
 
Few years later, Salagean-Mandache (\cite{Salagean-Mandache99}) proved that, except for the known case $p=n=2$,
it is not possible to construct a metric $d$ induced by a weight in $\Z_{p^{n}}$ such that $(\Z_{p^{n}},d)$ is isometric to 
$(\Z_p^n,d_{\Ham})$ for any prime $p$.
This result was then extended by Sueli Costa and collaborators showing the non-existence of isometries from $\Z_{m^n}$ to a 
Hamming space $X^n$, $|X|=m$ (see \cite{Symmetry-Lee} for $m=p$ prime, \cite{DBLP:journals/dm/AlvesC03} for arbitrary $m$).

In another direction, Carlet (\cite{Ca}) generalized the Gray map to an embedding between $\Z_{2^{k}}$ and $\Z_2^{2^{k-1}}$ preserving distances. This map naturally extends coordinatewise to $(\Z_{2^{k}})^n$ and $\Z_2^{2^{k-1}n}$. 
A couple of years later, Yildiz--\"Ozger (\cite{YO}) proved that $\Z_{p^k}$, with $p$ an odd prime, can be isometrically embedded into 
$\Z_p^{p^{k-1}}$ with the Hamming metric for any $k>1$ (see Remark \ref{YO recover}).
From a more general point of view, 
Greferath and Schmidt (\cite{Greferath}) further generalized the Gray map to an embedding from an arbitrary finite chain ring $R$ with the homogeneous metric to the residue field $F=R/\mathfrak{m}$ with de Hamming metric. More precisely, $(R, d_{Hom}) \hookrightarrow 
(F^{q^{m-1}}, d_{\Ham})$ where $q=|F|$ and $m={\rm length}(R)$. 
They used their map to construct interesting non-linear binary codes.
More recently, D'Oliveira and Firer (\cite{embeddinghamming}, \cite{minimalhamming}) showed that up to a decoding equivalence, any metric space can be isometrically embedded into a hypercube with the Hamming metric.

The goal of this work is to better understand isometries and isometric embeddings between finite groups (typically finite fields or finite rings for their applications in coding theory). We will give new explicit isometries and isometric embeddings from cyclic groups and also provide a general procedure to obtain isometries between arbitrary groups.

\subsubsection*{Outline and results}
Now, we briefly summarize the structure of, and the results in, the paper. In Section 2, we first recall some basic preliminaries on metric spaces, $G$-invariant metrics and isometries. If $G$ is a group acting on a metric space $(X,d)$, we define the associated symmetry group and their $G$-representations. 
In Proposition \ref{transfer} we show that given  
$(X,d)$ with a $G$-representation, there is a bijection $\varphi : X \rightarrow G$ inducing a group structure on $X$ and a metric $d_G$ on $G$ such that $\varphi$ is a group isomorphism and 
$\varphi :(X,d) \rightarrow (G,d_G)$ is an isometry.

In the next section we give a simple group-theoretical proof of the known result that there are no cyclic representations of a Hamming 
space $X^n$ for $X$ of prime cardinality (see Proposition~\ref{prop isometrias}). In particular, there is no isometry between $\Z_{p^n}$ and $(\Z_p^n,d_{\Ham})$.

In Section 4 we consider isometric embeddings, i.e.\@ injective maps between metric spaces preserving distances. We generalize the result of Yildiz--Özger asserting that $\Z_{p^k}$, $p$ prime, can be isometrically embedded into $\Z_{p}^{p^{k-1}}$ for any $k>1$ with the Hamming metric.
In Theorem~\ref{teo emb} we generalize this result by proving that for any $m$ and any subgroup $H$ of $\Z_m$ of index $n$, $\Z_m$ can be isometrically embedded into $H^n$ with the Hamming metric. This allows to isometrically embed a ring into rings of different characteristics as noted in 
Remark~\ref{chars}. 
In Example 4.8 we consider the subgroups of $\Z_{12}$. In Remark \ref{lee rec} we show that the isometric embedding of $\Z_{2n}$ into $\Z_2^n$ with the Hamming metric recovers the Lee metric on $\Z_{2n}$.

In Section 5, for any $q, n\in \N$ with $q\ge 2$, we define the $q$-adic metric $d_q$ on $\Z_{q^n}$. 
The $RT$-metric was introduced by Rosenbloom and Tsfasman in \cite{RT-metric} and has since then proven to be a quite useful metric in coding theory.
In Theorem \ref{IsoRT} we give a short and direct proof that $\Z_{q^n}$ with the $q$-adic metric is isometric to $(\Z_q)^n$ with the $RT$-metric, that is 
$$(\Z_{q^n}, d_q) \simeq (\Z_q^n, d_{RT}).$$

In the next section we show that any isometry between subgroups can be extended to the ambient groups 
(see Theorem \ref{Isonotrivial}). This implies that any pair of groups of the same size (and hence all) are isometric (see Corollary \ref{ext metric}). So, for instance, $\Z_2^3$, $\Z_2\times \Z_4$, $\Z_8$, $\D_4$ and $\Q_8$ are all mutually isometric.

In Section 7, we consider metrics on chain of subgroups and chain isometries. 
If $G$ has a chain $\C$ of subgroups, in Definition 7.1 we introduce the associated chain metric $d_\C$. In Remark 7.2 we show how the $q$-adic metric and the $RT$-metric can be naturally considered as chain metrics. In Definition 7.5 we define the notion of chain isometry, that is when two chains of subgroups of the same length of two groups of the same size are isometric.

To say that two groups are chain isometric gives more information than merely saying that they are isometric, since this implies that every step of the chains are isometric to each other (see \eqref{diagram}).
In Theorem 7.11, using geometric chains (see \eqref{cocientes}) we generalize Theorem~5.2 to groups not necessarily cyclic. More precisely, if $H < G$, with $|G|=q^n$ and $|H|=q$ then
$(G, d_\C) \simeq (H^{n}, d_{RT})$
where $d_\C$ is the chain metric associated to some chain of length $n$ with initial term $H$.
The most general result will be obtained in the next section.

Finally, in Section 8, we consider the block Rosenbloom--Tsfasman metric $d_{BRT}$ which generalizes the $RT$-metric 
(see Definition \ref{brt}). In Theorem 8.2 we prove that given a proper subgroup $H$ of a group $G$ and a chain $\C$ with initial term $H$ we have that $G$ with the metric $d_\C$ induced by the chain is isometric to $H^{[G:H]}$ with the block $RT$-metric, i.e.\@ 
$$(G,d_\C) \simeq (H^{[G:H]}, d_{BRT}).$$

\section{Invariant metrics on groups}
In this paper $X$ will always denote a finite set and $G$ a finite group. 
We begin by recalling some standard definitions.
A function $d :X \times X \rightarrow \mathbb{R}_{\geq 0}$ is called a \textit{metric} on $X$ if it is definite positive, symmetric, and satisfies the triangle inequality. That is, 
($a$) $d(x,y)\geq 0$ and $d(x,y)=0 \, \Leftrightarrow\, x=y$,
($b$) $d(x,y)=d(y,x)$,
and ($c$) $d(x,y)\leq d(x,z)+ d(z,y)$ hold for all $x,y,z\in X$. 
The pair $(X, d)$ is called a \textit{metric space}. 
If $d$ takes values in $\N_0$ and $\text{Im}(d) \subset [\![0,n]\!]$ with $n=|X|$ we say that $d$ is \textit{integral} and that $(X,d)$ is an \textit{integral metric space}.

Given an injective function $f : X \to Y$ between sets and a metric $d$ on $Y$, one can define the \textit{pullback metric} of $f$ on $X$ by 
\begin{equation} \label{fd*}
d_f(x,x') = d(f(x), f(x')),  \qquad x,x' \in X.
\end{equation} 

Two metric spaces $(X_1,d_1)$ and $(X_2,d_2)$ are said to be \textit{isometric},  
denoted by $(X_1,d_1) \simeq (X_2,d_2)$, 
if there is an \textit{isometry} between $X_1$ and $X_2$. 
That is, there is a bijection $\varphi : X_1 \rightarrow X_2$ such that for every $x,y \in X_1$ we have  
\begin{equation} \label{iso} 
d_1(x,y) = d_2(\varphi(x), \varphi(y)). 
\end{equation}
In other words, $d_1$ is the pullback metric of $d_2$.

A metric $d$ on $X$ can be naturally extended to the metric $d^n$ on $X^n$ in the following way
\begin{equation} \label{dn}
d^n(x,y) = \sum_{i=1}^{n} d(x_i,y_i)
\end{equation}
with $x=(x_1,\ldots,x_n), y=(y_1,\ldots,y_n) \in X^n$.
For instance, the Hamming metric $d_{\Ham}$ on $X$ extends to $X^n$ giving the most popular metric in coding theory 
$$d_{\Ham}^n(x,y) = \sum_{i=1}^{n}d_{\Ham}(x_i,y_i) = |\{ 1\leq i\leq n \;:\; x_i \neq y_i\}|.$$
Sometimes, the extended metric $d^n$ is also called $d$. 
This is a particular case of the product metric. If $(X_i,d_i)$, $i=1,\ldots,n$ are metric spaces then the \textit{product metri}c $d_\pi = d_1 \times \cdots \times d_n$ on $X=X_1 \times \cdots \times X_n$ is given by 
$$d_\pi(x,y) = d_1(x_1,y_1) + \cdots + d_n(x_n,y_n)$$
for $x=(x_1,\ldots,x_n)$, $y=(y_1,\ldots,y_n)$.

\sk 

A map $w : X \rightarrow \mathbb{R}$ is called a \textit{weight function} if
 $w(x) \geq 0$ for all $x\in X$ and $w(x)=0$ for exactly one element $x$ of $X$.
If $w$ takes integral values and moreover ${\rm Im}(w) \subset [\![0,N]\!]$ for some $N\in \mathbb{N}$ 
we will say that $w$ is an \textit{integral} weight. 
The pair $(X,w)$ is called a \textit{weight space} or integral weight space if $w$ is integral. 
If $(X,+)$ is a group, then $w$ must also satisfy the subadditive property, that is
$w(x+y) \le w(x)+w(y)$ for every $x,y \in X$.
Given a metric space $(X,d)$ and $a \in X$ we can canonically define a weight function $w_a$ by 
$$ w_a(x) = d(x,a), \quad x \in X.$$
If $|X|=n$, there are $n$ different weight functions as above.
For instance, if $X$ is a finite set and $x_0$ is a fixed element, the \emph{Hamming weight relative to $x_0$} is given by
\begin{equation}
    w_{x_0}(x)=d_{\Ham}(x,x_0)=
    \begin{cases}
      1 & \quad \text{if } x \neq x_0 , \\
      0 & \quad \text{if } x=x_0 .
    \end{cases}
  \end{equation}

If $(X,d)$ is a metric space with integral weight function $w$, the \textit{weight distribution} of $(X,d)$ is the set of weight frequencies $\{ A_0,A_1,\ldots,A_N\}$ where 
 $A_{i} = \#\{x\in X : w(x) = i\}$. The \textit{weight enumerator polynomial} of $(X,d)$ is defined by
 \begin{equation} \label{Wxd}
 \mathcal{W}_{(X,d)} (t) = \sum_{x\in X} t^{w(x)} = \sum_{i=0}^N A_i t^i.
 \end{equation}
Let $(X_1,d_1)$, $(X_2,d_2)$ be two metric spaces such that $0 \in X_1, X_2$ and consider the product space $X=X_1 \times X_2$ with the product metric $d_1 \times d_2$. Notice that we get 
$$\mathcal{W}_{(X,d)}(t) = \mathcal{W}_{(X_1,d_1)}(t) + \mathcal{W}_{(X_2,d_2)}(t) + \sum_{(x_1,x_2) \in X_1^* \times X_2^*} t^{w((x_1,x_2))}$$
where $X_i^*$ denotes $X_i \smallsetminus \{0\}$ for $i=1,2$.

All metrics and weights considered in this paper will be integral.

\subsection*{$G$-invariant metrics}
We are interested in the particular case in which $X=G$ is a group. 
The metric $d$ is called \textit{right (resp.\@ left) translation invariant} if for any $g,g',h$ in $G$ we have 
$$d(gh,g'h)=d(g,g')$$ 
(resp.\@ $d(hg,hg')=d(g,g')$). If $G$ is abelian both notions coincide and $d$ is called \textit{translation invariant}.   
There is a distinguished weight function $w(x)=d(x,e)$, where $e$ is the identity element of $G$. 
Also, if $(G,w)$ is a weight space, one can define a metric $d$ on $G$ by  
$$d(x,y) = w(x-y)$$ 
for every $x,y \in G$, provided that $w(-x)=w(x)$ holds for every $x \in G$ (or, in multiplicative notation, requiring that $d(x,y)=w(xy^{-1})$ with $w(x^{-1})=w(x)$ for every $x \in G$),  which is automatic for elementary $2$-groups.

Let $\Sym_{X}$ denote the permutation group of $X$. If $G$ acts on $X$ we have $G \le \Sym_X$.
\begin{defi}
Let $(X,d)$ be a metric space and $G \leq \Sym_X$.
We say that $(X,d)$ is \textit{$\sigma$-invariant} for $\sigma \in G$, denoted $d^\sigma=d$, if 
$$ d(\sigma (x),\sigma (y)) = d(x,y)$$ 
for all $x, y \in X$.
Further, $(X,d)$ is called \textit{$G$-invariant} if $d$ is \textit{$\sigma$-invariant} for every $\sigma \in G$.
The \textit{symmetry group} of $(X,d)$ is defined by
\begin{equation} \label{sym group}
\Gamma(X,d) = \{\sigma \in \Sym_{X}  :  d^\sigma =d \}.
\end{equation} 
We will say that $(X,d)$ has a \textit{$G$-representation} if there is a group $G \le \Gamma(X,d)$ which is \textit{regular} 
(or \textit{simply transitive}); that is, 
$|G|=|X|$ and the action of $G$ is transitive. 
\end{defi}

Notice that if $X=G$ and $d$ is right translation invariant then $d$ is $G_R$-invariant, where $G_R:G\rightarrow \Sym_G$
is the right regular representation given by $g \mapsto R_g$ for $g\in G$ with $R_g(x) = xg$ for any $x\in G$. 
Furthermore, $(X,d)$ is $G_R$-invariant if and only if $G_R \le \Gamma(X,d)$.
Similarly, the above facts hold for $d$ a left translation invariant metric and the left regular representation $G_L$.

\begin{rem}
Let $f:X \rightarrow Y$ be a bijective map from $X$ to a $G$-invariant metric space $(Y,d)$.
Then, the action $\sigma_Y$ of $G$ on $Y$ can be transferred to $X$ in such a way that the pullback metric $d_f$ becomes $G$-invariant. 
In fact, defining the action of $G$ on $X$ by $\sigma_X = f^{-1} \circ \sigma_Y \circ f$ we have
\begin{eqnarray*}
d_f(\sigma_X(x), \sigma_X(x')) &=& d_f \big( f^{-1}(\sigma_Y(f(x))), f^{-1}(\sigma_Y(f(x'))) \big) \\ 
&=& d \big( \sigma_Y(f(x)), \sigma_Y(f(x')) \big) = d(f(x),f(x')) = d_f(x,x')
\end{eqnarray*}
for $x,x'\in X$, where we have used that $d$ is $G$-invariant.
\end{rem}

\begin{ej}
Let $(X,d)$ be a metric space with $|X|=n$ and $d$ the discrete metric, that is $d(x,x)=0$ and $d(x,y)=1$ for every $x\ne y$.
Then $\Gamma (X,d) \simeq \Sym_n$ and, hence, $(X, d)$ has a $G$-representation for every group of order $n$, as a consequence of Cayley's Theorem.
\end{ej}

We now show that given a $G$-representation on a metric space $(X,d)$, the group $G$ inherits a metric and the set $X$ inherits a group structure.

\begin{prop} \label{transfer}
Suppose that the metric space $(X,d)$ has a $G$-representation. Then, there is a bijection $\varphi : X \rightarrow G$ which induces 
a group structure on $X$ and a metric $d_G$ on $G$ such that $\varphi$ is a group isomorphism and 
$\varphi: (X,d) \rightarrow (G,d_G)$ 
is an isometry.
Moreover, $d_G =d_{\varphi^{-1}}$ is translation invariant, that is 
$d_G(g_1 h, g_2 h) = d_G(g_1, g_2)$ for every $g_1, g_2, h\in G$. 
\end{prop}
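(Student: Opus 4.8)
\emph{Proof proposal.} The plan is to use the regularity (simple transitivity) of the $G$-action to identify $X$ with $G$ through a chosen base point, and then to transport the group operation and the metric across this identification; the only point needing a little care is to orient the identification so that the resulting invariant metric comes out \emph{right}-translation invariant rather than left. First I would fix an arbitrary $x_0 \in X$. Since $G \le \Gamma(X,d) \le \Sym_X$ acts regularly on $X$, the action is simply transitive, so the map $G \to X$ sending $g \mapsto g^{-1}(x_0)$ is a bijection: transitivity gives surjectivity, and the uniqueness in simple transitivity gives injectivity. Let $\varphi : X \to G$ be the inverse of this bijection, so that $\varphi^{-1}(g) = g^{-1}(x_0)$ for every $g \in G$ and, in particular, $\varphi(x_0) = e$.

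Next I would transport the group structure of $G$ to $X$ along $\varphi$, i.e.\@ define $x \ast y := \varphi^{-1}\!\big(\varphi(x)\varphi(y)\big)$, with identity $x_0$ and inverses $\varphi^{-1}(\varphi(x)^{-1})$. With this operation $X$ is a group and $\varphi$ is, tautologically, a group isomorphism. I would then equip $G$ with the pullback metric $d_G := d_{\varphi^{-1}}$ of \eqref{fd*}, that is $d_G(g_1,g_2) = d\big(\varphi^{-1}(g_1),\varphi^{-1}(g_2)\big)$. Because $\varphi^{-1}$ is a bijection, $d_G$ satisfies all the metric axioms inherited from $d$ (positive-definiteness using injectivity of $\varphi^{-1}$), and it is again integral since $\operatorname{Im}(d_G)=\operatorname{Im}(d)\subseteq[\![0,|X|]\!]=[\![0,|G|]\!]$; moreover, directly from the definition of the pullback, $d_G(\varphi(x),\varphi(y)) = d(x,y)$, so $\varphi : (X,d) \to (G,d_G)$ is an isometry.

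It remains to verify that $d_G$ is right-translation invariant. For $g_1,g_2,h \in G$, using $\varphi^{-1}(g) = g^{-1}(x_0)$, that $(g_i h)^{-1} = h^{-1} g_i^{-1}$, and that permutations compose, I would write
\[
d_G(g_1 h, g_2 h) = d\big((g_1h)^{-1}(x_0),\, (g_2h)^{-1}(x_0)\big) = d\big(h^{-1}g_1^{-1}(x_0),\, h^{-1}g_2^{-1}(x_0)\big).
\]
Since $h^{-1}\in G\le\Gamma(X,d)$, the permutation $h^{-1}$ is an isometry of $(X,d)$, so the right-hand side equals $d\big(g_1^{-1}(x_0),\, g_2^{-1}(x_0)\big) = d_G(g_1,g_2)$, which is the claim. (Conceptually, this is just the observation that under $\varphi$ the permutation $g$ of $X$ becomes right multiplication by $g^{-1}$ on $G$, so the isometries $g$ of $(X,d)$ correspond exactly to the right translations of $(G,d_G)$.)

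I do not expect a genuine obstacle: the argument is purely formal, being transport of structure combined with the definition of the pullback metric. The only thing to watch is the convention in the identification — taking $\varphi^{-1}(g)=g^{-1}(x_0)$ rather than $\varphi^{-1}(g)=g(x_0)$ is precisely what yields right-translation invariance; the alternative choice would give a left-translation invariant $d_G$, which is immaterial in the abelian case that is the main interest in what follows.
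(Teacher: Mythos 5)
Your proposal is correct and follows essentially the same route as the paper: fix a base point $x_0$, use simple transitivity to identify $X$ with $G$, pull back the metric along this identification, and invoke $G \le \Gamma(X,d)$ to obtain translation invariance. The only differences are cosmetic — you transport the group law abstractly (avoiding the paper's direct check of associativity for its product $xy = g_y(x)$) and insert an inverse in the identification, $\varphi^{-1}(g) = g^{-1}(x_0)$, so that right-invariance comes out under the standard composition convention, whereas the paper works with $g \mapsto g(x_0)$ and handles the same point through its choice of composition order.
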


\begin{proof}
Fix an element $x_0 \in X$.
Since $G$ acts regularly on $X$, $G$ acts transitively on $X$ and $|G|=|X|$. 
Thus, for each $x\in X$ there is a unique $g=g_x \in G$ such that $g(x_0) = x$.
Hence, we can define the map 
$$ \varphi:X \rightarrow G, \qquad  x \mapsto g_x.$$ 
This gives a group structure on $X$ by considering the product
$$xy = g_y(x).$$
To check associativity, note that $x(yz)=g_{yz}(x)$ and $(xy)z = g_{z}(xy)=g_zg_y(x)$. Since $g_zg_y(x_0) = zy = g_{yz}(x_0)$ we have that $g_zg_y = g_{yz}$ and hence $x(yz)=(xy)z$ for any $x,y,z \in G$.
It is easy to see that $x_0$ is the identity element in $X$ and 
$$x^{-1}=\varphi^{-1}(g_x^{-1})=g_x^{-1}(x_0).$$ 
Therefore, $\varphi$ is a group homomorphism and hence an isomorphism. 

Now, $d$ induces the metric $d_G$ in $G$ by 
$$d_G(g_x,g_y)=d(x,y).$$ 
Clearly, $(X,d)$ and $(G,d_G)$ are isometric since
$d_G(\varphi(x),\varphi(y))=d(x,y)$, by definition.
It only remains to show that $d_G$ is translation invariant. 
For $g_x,g_y,h \in G$ we have
\begin{eqnarray*}
d_G(g_x \cdot h, g_y \cdot h) &=& d_G((g_x \cdot h)(x_0), (g_y \cdot h)(x_0)) = d(h(g_x(x_0)), h(g_y(x_0))) \\ 
							  &=& d(h(x), h(y)) = d(x, y) = d_G(g_x, g_y)
\end{eqnarray*}
as we wanted to see. 
\end{proof}

\begin{rem}
A similar idea as in the previous proposition was established by Forney in (\cite{Forney}) in the context of geometrically uniform signal sets in $\mathbb{R}^n$ with the Euclidean metric.  
\end{rem}

We now illustrate the above proposition, showing that the $G$-representations of a set strongly depend on the chosen metric and on 
the symmetry of the group. For clarity we will sometimes use the graph of distances of a finite metric space $(X,d)$. If $|X|=n$, the graph of distances of $X$ is the weighted complete graph $K_n$ where each edge $xy$ has weight $d(x,y)$.

\begin{ej}
Consider the set $X=\{x,y,w,z\}$. Since $X$ has 4 elements, any $G$-representation of $X$ has only two possibilities: 
$G \simeq \Z_4$ or $G\simeq \Z_2 \times \Z_2$.

Consider some metrics on $X$ given by the following graphs of distances:

\begin{figure}[H]
\centering
\begin{minipage}[t][][t]{0.30\textwidth}
\begin{tikzpicture}[scale=0.75,transform shape,>=stealth',shorten
>=1pt,auto,node distance=3cm, thick,main node/.style={circle,draw,font=\Large,minimum size=8mm}]
\tikzstyle{every node}=[node distance = 2.8cm,bend angle    = 45,fill          =
gray!30]

  \node[main node,font=\fontsize{14}{144}] (1) {$x$};
  \node[main node,font=\fontsize{14}{144}] (2) [above left of=1] {$y$};
  \node[main node,font=\fontsize{14}{144}] (3) [above right of=2] {$z$} ;
  \node[main node,font=\fontsize{14}{144}] (4) [above right of=1] {$w$};

  \path[every node/.style={font=\large,circle}]
  
    (1) edge node[right] {1} (4)
       edge[red] node [right, pos=0.8] {2} (3)
    (2) edge node [left] {1} (1)
        edge[red] node [below, pos=0.3] {2} (4)
        
    (3) edge node [left] {1} (2)
        
    (4) edge node [right] {1} (3);

\end{tikzpicture}
\centering
\caption{$d_1$ }
\label{1}
\end{minipage}
\hfill
\begin{minipage}[t][][t]{0.30\textwidth}

\begin{tikzpicture}[scale=0.75,transform shape,>=stealth',shorten
>=1pt,auto,node distance=3cm, thick,main node/.style={circle,draw,font=\small,minimum size=8mm}]
\tikzstyle{every node}=[node distance = 2.8cm,bend angle    = 45,fill          =
gray!30]

  \node[main node,font=\fontsize{14}{144}] (1) {$x$};
  \node[main node,font=\fontsize{14}{144}] (2) [above left of=1] {$y$};
  \node[main node,font=\fontsize{14}{144}] (3) [above right of=2] {$z$} ;
  \node[main node,font=\fontsize{14}{144}] (4) [above right of=1] {$w$};

  \path[every node/.style={font=\large,circle}]

    (1) edge node[right] {1} (4)
       edge node [right, pos=0.8] {1} (3)
    (2) edge[red] node [left] {2} (1)
        edge node [below, pos=0.3] {1} (4)
        
    (3) edge node [left] {1} (2)
        
    (4) edge[red]  node [right] {2} (3);

\end{tikzpicture}
\centering
\caption{$d_2$ }
\label{2}
\end{minipage}
\hfill
\begin{minipage}[t][][t]{0.30\textwidth}

\begin{tikzpicture}[scale=0.75,transform shape,>=stealth',shorten
>=1pt,auto,node distance=3cm, thick,main node/.style={circle,draw,font=\small,minimum size=8mm}]
\tikzstyle{every node}=[node distance = 2.8cm,bend angle    = 45,fill          =
gray!30]

  \node[main node,font=\fontsize{14}{144}] (1) {$x$};
  \node[main node,font=\fontsize{14}{144}] (2) [above left of=1] {$y$};
  \node[main node,font=\fontsize{14}{144}] (3) [above right of=2] {$z$} ;
  \node[main node,font=\fontsize{14}{144}] (4) [above right of=1] {$w$};

  \path[every node/.style={font=\large,circle}]

    (1) edge [blue] node[right] {3} (4)
       edge[red] node [right, pos=0.8] {2} (3)
    (2) edge node [left] {1} (1)
        edge[red] node [below, pos=0.3] {2} (4)
        
    (3) edge [blue] node [left] {3} (2) 
        
    (4) edge  node [right] {1} (3);

\end{tikzpicture}
\centering
\caption{$d_3$ }
\label{3}
\end{minipage}
\hfill
\end{figure}

We now show that the number of $G$-representations of $(X,d)$ depends strongly on the chosen metric. 
Let 
$$G_1=\langle \rho= (xywz) \rangle \qquad \text{and} \qquad G_2 = \langle \tau_1=(xy)(wz), \tau_2=(xz)(yw) \rangle$$
be the groups defined by the permutations $\rho$ and $\tau_1,\tau_2$, respectively. Note that $G_1 \simeq \Z_4$,  $G_2 \simeq \Z_2 \times \Z_2$, and that they act transitively on $X$. 

\noindent ($i$) 
We have that $(X,d_1)$ is $G_i$-invariant, that is $G_i \le \Gamma(X,d_1)$, for $i=1,2$.
Fix $x$ as the identity element in $X$ and define $\varphi_1 : X \rightarrow G_1$ as follows
$$x \mapsto e, \quad y \mapsto \rho=(xywz), \quad w \mapsto \rho^2=(xw)(yz), \quad z \mapsto \rho^3=(xzwy).$$ 
Also, define $\varphi_2 : X \rightarrow G_2$ by 
$$x \mapsto e, \quad y \mapsto \tau_1=(xy)(wz), \quad w \mapsto \tau_1\tau_2=(xw)(yz), \quad z \mapsto \tau_2=(xz)(yw).$$ 
By Proposition \ref{transfer}, there are isometries $(X,d_1) \simeq (G_1,d_{G_1})$ and $(X,d_1) \simeq (G_2, d_{G_2})$.
Note that under the isomorphisms $G_1 \simeq \Z_4$ and $G_2 \simeq \Z_2 \times \Z_2$ the metrics $d_{G_1}$ and $d_{G_2}$ correspond to the Lee metric $d_{Lee}$ on $\Z_4$ and to the Hamming metric $d_{\Ham}^2$ on $\Z_2 \times \Z_2$. That is 
$$(X,d_1) \simeq (\Z_4,d_{Lee}) \qquad \text{and} \qquad (X,d_1) \simeq (\Z_2 \times \Z_2,d_{\Ham}^2).$$ 
In particular, by transitivity, we have recovered the known isometry between $\Z_4$ and $\Z_2 \times \Z_2$ given by the Gray map.

\noindent ($ii$) 
Consider now the metric $d_2$. Notice that $(X,d_2)$ is $G_2$-invariant but it is not $G_1$-invariant. In this case, 
$(X,d_2)$ has only one $G$-representation with $G\simeq \Z_2 \times \Z_2$.

\noindent ($iii$) 
Finally, observe that when the metric $d_3$ is considered, the metric space $(X,d_3)$ has no $G$-representations at all because the group of symmetries of $(X,d)$ is trivial (none of the groups can preserve the distances). 
\hfill $\lozenge$
\end{ej}

\begin{rem} \label{rem inv}
From now on, if $G$ is a group, $(G,d)$ will denote a metric space where the distance $d$
is $G$-invariant and $G$ acts by right translations, i.e.\@ we identify $G$ with its right regular representation $G_{R} \leq \Sym_G$.
\end{rem}

\section{Hamming spaces are not isometric to cyclic groups}
Due to the relevance shown by the Gray map $\mathcal{G} : \mathbb{Z}_4 \rightarrow \Z_2^2$ in coding theory, 
people was concerned whether there is a generalization of this isometry
sending $\Z_{p^{n}}$ to $(\Z_p)^n$, with $p$ prime.
As already mentioned in the Introduction, Salagean-Mandache proved (\cite{Salagean-Mandache99}) that, except for the known case $p=n=2$, 
it is impossible to construct a metric $d$ in $\Z_{p^{n}}$ such that $(\Z_{p^{n}},d)$ is isometric to 
$(\Z_p^n,d_{\Ham})$. 
Sueli Costa and collaborators deal with the existence of isometries of a Hamming space $X^n$, 
$|X|=m$ (see \cite{Symmetry-Lee} for $m=p$ prime, \cite{DBLP:journals/dm/AlvesC03} for arbitrary $m$). 
They proved that there are no $G$-representations of the Hamming space $X^n$ with $G$ a cyclic group, 
except for the Gray map and the trivial case $n=1$; that is, we have

\begin{thm}[\cite{DBLP:journals/dm/AlvesC03}]	\label{IsoCiclicoNo}
Let $(X^n,d_{\Ham})$ be a Hamming space, with $|X|=m$. If $(m,n)\neq (2,2)$ and $n>1$, there does not exist any cyclic group $G$ and any metric $d$ on $G$ such that $(G,d)$ is isometric with $(X^n,d_{\Ham})$. 
\end{thm}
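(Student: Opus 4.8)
The plan is to use Proposition~\ref{transfer} to convert the statement into a purely group-theoretic one about the symmetry group of the Hamming space, and then to rule out a regular cyclic subgroup by an orbit-length count. Under the standing convention that $d$ is a $G$-invariant metric (Remark~\ref{rem inv}), Proposition~\ref{transfer} shows that a cyclic group $G$ and a metric $d$ with $(G,d)\simeq(X^n,d_{\Ham})$ exist precisely when $(X^n,d_{\Ham})$ admits a $G$-representation with $G$ cyclic, i.e.\ when $\Gamma(X^n,d_{\Ham})$ contains a regular --- equivalently, as cyclic groups are abelian, transitive --- cyclic subgroup of order $m^n$. So the task is to show that for $n\ge 2$ with $(m,n)\ne(2,2)$ the group $\Gamma(X^n,d_{\Ham})$ has no transitive cyclic subgroup of order $m^n$. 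For this I would invoke the classical identification of the isometry group of the Hamming space, $\Gamma(X^n,d_{\Ham})\cong\Sym_m\wr\Sym_n=\Sym_m^{n}\rtimes\Sym_n$ (valid for all $m\ge 2$): an isometry preserves the relation ``differ in exactly one coordinate'', hence permutes the $n$ parallel classes of coordinate lines, giving a projection $\pi$ onto $\Sym_n$ whose kernel is the group $\Sym_m^{n}$ of coordinatewise permutations. This is short to prove, or may be quoted.

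Suppose then that $G=\langle g\rangle\le\Sym_m\wr\Sym_n$ is transitive cyclic of order $m^n$, and put $\bar G=\pi(G)\le\Sym_n$. I would first reduce to the case where $\bar G$ is transitive on $\{1,\dots,n\}$. If it is not, let $\mathcal{O}_1,\dots,\mathcal{O}_k$ ($k\ge 2$) be its orbits; since $\bar G$ preserves this partition, every element of $G$ acts block-diagonally on $X^n=\prod_j X^{\mathcal{O}_j}$, so $G$ embeds into $\prod_j\Sym_{X^{\mathcal{O}_j}}$ with $|G|=\operatorname{lcm}_j|G_j|$, each coordinate image $G_j$ being transitive cyclic on $X^{\mathcal{O}_j}$ and having transitive image in $\Sym_{\mathcal{O}_j}$. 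As every $|\mathcal{O}_j|<n$: if all $|\mathcal{O}_j|=1$ then each $G_j\cong\Z_m$ and $|G|=m<m^n$, impossible; and if some $|\mathcal{O}_j|=\ell_j\ge 2$ then the estimate below (applied with $\ell_j$ in place of $n$) rules out a transitive cyclic subgroup of order $m^{\ell_j}$ unless $(m,\ell_j)=(2,2)$, forcing $m=2$, hence all blocks of size $\le 2$, $|G_j|\le 4$, and $|G|=\operatorname{lcm}_j|G_j|\le 4$, contradicting $|G|=2^n\ge 8$ (note $n=\sum_j|\mathcal{O}_j|\ge 3$). Therefore $\bar G$ is transitive, so $\bar g$ is an $n$-cycle and $\bar G\cong\Z_n$.

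For this remaining case I would compute with the wreath structure: $\langle g^n\rangle=\langle g\rangle\cap\ker\pi$ has index $n$ in $\langle g\rangle$, and $g^n$ acts on $X^n$ coordinatewise by permutations $p_1,\dots,p_n\in\Sym_m$ that are cyclic rotations --- hence conjugates --- of a single ``cycle product'' $p\in\Sym_m$. Thus for $\bar x\in X^n$ the $\langle g^n\rangle$-orbit of $\bar x$ has size $\operatorname{lcm}_i|\operatorname{orb}_{p_i}(x_i)|$, each factor being a cycle length of $p$, and the $\langle g\rangle$-orbit of $\bar x$ --- a union of at most $n$ translates of it --- has size at most $nL$, where $L=\max\{\operatorname{lcm}(S):S\subset\Z_{\ge 1},\,|S|\le n,\,\sum_{s\in S}s\le m\}$ (the distinct cycle lengths of $p$ form such a set). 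Transitivity of $G$ forces some $\langle g\rangle$-orbit to be all of $X^n$, hence $nL\ge m^n$, which contradicts the elementary inequality $nL<m^n$, valid for $n\ge 2$ with $(m,n)\ne(2,2)$: from $\operatorname{lcm}(S)\le\prod_{s\in S}s\le(m/|S|)^{|S|}$ (AM--GM), when $n\le m/e$ the maximum is attained at $|S|=n$, giving $L\le(m/n)^n$ and $nL\le m^n/n^{n-1}\le m^n/2$; when $n>m/e$ one gets $L\le e^{m/e}$, which for all but small $m$ is dwarfed by $m^n$, and the finitely many remaining pairs --- including $(2,2)$, where $nL=m^n$ and the Gray map indeed realizes the isometry --- are checked directly.

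The step I expect to be the genuine obstacle is this last inequality: conceptually it merely asserts that the ``truncated Landau function'' $L$ is small, but a uniform bound is delicate precisely in the range $n\approx\sqrt m$, where the AM--GM estimate is nearly sharp, so that range (together with a handful of small $m$) must be argued with some care or settled by a finite verification. I would note that when $m=p$ is prime the combinatorics can be bypassed entirely: a regular $\Z_{p^n}$ would force its Sylow $p$-subgroup $\Z_{p^n}$ to embed into a Sylow $p$-subgroup of $\Sym_p\wr\Sym_n$, which is an iterated wreath product of copies of $\Z_p$ and has exponent $p^{1+\lfloor\log_p n\rfloor}<p^n$ whenever $n\ge 2$ and $(p,n)\ne(2,2)$; so no element of order $p^n$ can exist there, and no such $\Z_{p^n}$ can occur.
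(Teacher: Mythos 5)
Your proposal is correct, but it is a genuinely different argument from the one in the paper --- and it proves more. In the paper, Theorem \ref{IsoCiclicoNo} is quoted from Alves--Costa, and only the prime case $|X|=p$ is reproved (Proposition \ref{prop isometrias}): there one puts both the hypothetical regular cyclic group and the translation group $\Z_p^n$ inside $\Gamma(X^n,d_{\Ham})\simeq\Sym_p\wr\Sym_n$, derives $p^{2n-1}\mid (p!)^n n!$ from Lemma \ref{SubgroupsIso}, refutes this via Legendre's formula except for $p=2$, $n=2^k$, and finishes the case $p=2$ by bounding element orders with Landau's function. You instead bound orbit lengths of an arbitrary cyclic subgroup of $\Sym_m\wr\Sym_n$: after reducing to a transitive coordinate projection, you use that $g^n$ lies in the base group with conjugate components to bound every $\langle g\rangle$-orbit by $nL$, with $L$ a truncated Landau-type maximum of lcm's, and then show $nL<m^n$ away from $(2,2)$. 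This yields the full statement for arbitrary $m$, which the paper's $p$-valuation method cannot give, at the cost of the block analysis for intransitive projections; your closing Sylow-exponent remark for prime $m$ is also correct and is arguably cleaner than the paper's Lemma-plus-Landau route. One reassurance about the step you flagged as the obstacle: it is not delicate. With your own split, for $2\le n\le m/e$ AM--GM gives $nL\le m^n/n^{n-1}\le m^n/2$; for $n>m/e$ (hence $m<en$) the crude bound $L\le e^{m/e}$ reduces the claim to $\ln n+m/e<n\ln m$, whose left-minus-right side is decreasing in $m$ on that range, so the worst case is $m=2$, where $n\ln 2-\ln n-2/e>0$ for all $n\ge3$; for $n=2$ monotonicity in $m$ settles $m\ge3$. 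Thus the only surviving pair is $(2,2)$, where the exact value $L=2$ gives equality $nL=m^n$, consistent with the Gray map; no further finite verification is needed.
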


We will give an alternative simple proof of this result, using group theory, in the case that $|X|=p$ is prime.
 
\begin{lema} \label{SubgroupsIso} 
If $G$ is a finite group containing two subgroups $H \simeq \mathbb Z_p^k$ and $K \simeq \mathbb Z_{p^\ell}$, 
with $p$ prime and $k,\ell \in \N$, 
then the order of $G$ is divisible by $p^{k+\ell-1}$.
\end{lema}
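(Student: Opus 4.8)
The plan is to extract from the two given subgroups a single $p$-subgroup of $G$ whose order is at least $p^{k+\ell-1}$, since then Lagrange's theorem immediately yields $p^{k+\ell-1} \mmid |G|$. First I would consider the product set $HK = \{hk : h\in H,\, k\in K\}$ inside $G$. Although $HK$ need not be a subgroup, its cardinality is governed by the standard formula $|HK| = |H|\,|K| / |H\cap K|$. Here $|H| = p^k$ and $|K| = p^\ell$, while $H\cap K$ is a common subgroup of a group of order $p^k$ and a cyclic group of order $p^\ell$; in particular $H\cap K$ is a $p$-group, and being a subgroup of the cyclic group $K$ it is itself cyclic, so $|H\cap K| = p^r$ for some $0 \le r \le \min(k,\ell)$. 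The crucial observation is that a cyclic $p$-group contains a unique subgroup of each order dividing its order, and $H$ — being elementary abelian of exponent $p$ — contains no cyclic subgroup of order $p^2$; hence any cyclic subgroup of $H\cap K$ has order at most $p$, forcing $r \le 1$, i.e. $|H\cap K| \in \{1, p\}$.

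Consequently $|HK| = p^{k+\ell}/|H\cap K| \ge p^{k+\ell-1}$. It remains to promote this lower bound on the size of the subset $HK$ to a statement about a subgroup. For this I would pass to a Sylow $p$-subgroup $P$ of $G$ containing $H$ (which exists since $H$ is a $p$-subgroup), but this need not contain $K$, so a cleaner route is: let $P$ be a Sylow $p$-subgroup of $G$. Then $|P| = p^a$ where $p^a \| |G|$, and every $p$-subgroup of $G$ — in particular every conjugate of $H$ and of $K$ — is contained in some Sylow $p$-subgroup. The set $HK$ lies in the subgroup $\langle H, K\rangle$ generated by $H$ and $K$; since both $H$ and $K$ are $p$-groups, $\langle H,K\rangle$ is generated by $p$-elements but is not obviously a $p$-group in a general (non-nilpotent) $G$. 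So instead I would argue directly: $|HK| \ge p^{k+\ell-1}$ and $HK \subseteq G$ gives $|G| \ge p^{k+\ell-1}$, but we want divisibility, not just the inequality. The fix is that $HK$, being a union of cosets of $K$ (indeed $HK = \bigcup_{h\in H} hK$), has cardinality a multiple of $|K| = p^\ell$; more to the point, $HK$ is a union of left cosets of $H$ too, so $|HK|$ is divisible by $p^k$ — but divisibility of $|G|$ by $|HK|$ is what fails for non-subgroups.

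Therefore the genuinely correct approach, and the one I would commit to, is the Sylow-theoretic one: replacing $H$ and $K$ by suitable conjugates we may assume both lie in a common Sylow $p$-subgroup $P$ of $G$. This is legitimate because conjugation is an automorphism of $G$ preserving isomorphism type, so $H^g \simeq \Z_p^k$ and $K^{g'} \simeq \Z_{p^\ell}$, and it suffices to recall that any two $p$-subgroups of $G$ are contained in Sylow $p$-subgroups, and all Sylow $p$-subgroups are conjugate — so after conjugating $K$ we can ensure $K \le P$ where $P \supseteq H$; wait, conjugating $K$ moves it into \emph{a} Sylow subgroup, which we then conjugate to $P$, simultaneously moving $H$. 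The clean statement: there exist $g, g' \in G$ with $H^{g}, K^{g'} \le P$. Then inside the $p$-group $P$ the subset $H^{g}K^{g'}$ has size $\ge p^{k+\ell-1}$ as computed above, hence $|P| \ge p^{k+\ell-1}$, and since $|P| \mmid |G|$ we conclude $p^{k+\ell-1} \mmid |G|$. The main obstacle is precisely this reduction to a common Sylow subgroup — one must be careful that conjugating one subgroup does not spoil the containment of the other, which is handled by first conjugating $K$ into some Sylow subgroup $Q \supseteq$ a conjugate of $H$ is not automatic either, so the honest fix is to conjugate $K$ so that $K^{g'}$ and $H$ both sit in Sylow subgroups, then use that $H \le P_1$ and $K^{g'} \le P_2$ with $P_2 = P_1^{\,h}$ for some $h$, giving $H^{h^{-1}}, K^{g'h^{-1}} \le P_1$ — and the bound on $|H^{h^{-1}} K^{g'h^{-1}}|$ is unchanged since $|H^{h^{-1}} \cap K^{g'h^{-1}}|$ is still a cyclic $p$-group of order $\le p$. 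This bookkeeping is the only delicate point; everything else is the coset-counting formula and Lagrange.
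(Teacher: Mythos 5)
Your proposal is correct and takes essentially the same route as the paper: reduce via Sylow's theorems to a single Sylow $p$-subgroup $P$ containing (conjugate copies of) both $H$ and $K$, bound $|P|\ge |HK| = |H|\,|K|/|H\cap K| \ge p^{k+\ell-1}$ using that $H\cap K$ is cyclic of exponent dividing $p$, and conclude because $|P|$ is a power of $p$ dividing $|G|$. One small remark: the final conjugation bookkeeping is simpler than you make it — $H$ already lies in some Sylow subgroup $P_1$, so you need only conjugate $K$ by an element carrying its Sylow subgroup onto $P_1$; your claim $H^{h^{-1}}\le P_1$ is not quite right as written (from $H\le P_1$ one gets $H^{h^{-1}}\le P_1^{h^{-1}}$), but leaving $H$ untouched removes the issue without affecting the argument.
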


\begin{proof}
By Sylow's theorems it is enough to consider only the case when $G=P$ is a $p$-group.
Therefore, $P$  contains subgroups $H$ and $K$ isomorphic to $\mathbb Z_p^k$ and $\mathbb Z_{p^\ell}$ respectively. Then we have that
$$|P| \geq |HK| = \frac{|H| |K|}{|H\cap K|}\geq \frac{p^k \cdot p^\ell}{p} = p^{k+\ell-1},$$
where we have used that $|H\cap K|=1$ or $p$, since $H\cap K$ is cyclic of order $p$.
Since $|P|$ is a power of $p$ and $|P| \ge p^{k+\ell-1}$, then $|P|$
is divisible by $p^{k+\ell-1}$. 
\end{proof}

We now restate Theorem \ref{IsoCiclicoNo} in terms of representations for spaces of prime cardinality. 

\begin{prop} \label{prop isometrias}
Let $(X^n,d_{\Ham})$ be a Hamming space, with $|X|=p$ prime. If $(p,n)\neq (2,2)$ and $n>1$, then there is no cyclic 
representation of $(X^n,d_{\Ham})$. In particular,
there is no isometry between $\mathbb Z_{p^{n}}$ and $(\mathbb Z_p^{n},d_{\Ham})$.
\end{prop}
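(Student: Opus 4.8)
The plan is to show that a cyclic representation of $(X^n, d_{\Ham})$ would force the ambient cyclic group to contain two incompatible subgroups, contradicting Lemma~\ref{SubgroupsIso}. Suppose for contradiction that $(X^n, d_{\Ham})$ has a cyclic representation; by Proposition~\ref{transfer} this means there is a cyclic group $G$ with $|G| = p^n$ and a translation-invariant metric $d_G$ on $G$ such that $(G, d_G) \simeq (X^n, d_{\Ham})$. So it suffices to produce inside the symmetry group $\Gamma(X^n, d_{\Ham})$ an obstruction to the existence of a regular cyclic subgroup, or equivalently to derive a contradiction from the existence of such a $G$ together with the combinatorial structure of the Hamming metric.

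The key observation is about the geometry of spheres in $(X^n, d_{\Ham})$. Fix the identity element $e \in G$ corresponding to some base point of $X^n$; the weight function $w(g) = d_G(g,e)$ is then the pullback of the Hamming weight, so the number of elements of weight $1$ equals the number of elements of $X^n$ at Hamming distance $1$ from the base point, namely $n(p-1)$. Now I would look at the subgroup structure. Under the isometry, the ``lines'' of the Hamming space — the $n$ cosets of the form (fix all coordinates but the $i$-th) — are subsets of size $p$ on which $d_{\Ham}$ restricts to the discrete metric scaled so that every two distinct points are at distance $1$... more precisely each such line together with the base point forms a copy of $(X, d_{\Ham})$, an integral metric space of diameter $1$. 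The crucial point: any isometric copy of this ``all-ones'' metric space of size $p$ sitting inside $(G, d_G)$ through $e$ must be a subgroup isomorphic to $\Z_p$, because the $p$ points, say $0 = g_0, g_1, \dots, g_{p-1}$, all of weight $\le 1$, with pairwise distances $\le 1$, force each nonzero $g_j$ to have weight exactly $1$ and each difference $g_j - g_k$ to have weight $\le 1$; combined with translation invariance one shows the set $\{g_0,\dots,g_{p-1}\}$ is closed under subtraction, hence a subgroup, and being of prime order it is $\Z_p$. Thus $G$ contains a subgroup $H \simeq \Z_p$. But $G$ itself is cyclic of order $p^n$, so $G \simeq \Z_{p^n}$, and $G$ contains $\Z_{p^n}$ as a subgroup (itself). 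Applying Lemma~\ref{SubgroupsIso} with $k = 1$, $\ell = n$ gives $p^{1+n-1} = p^n \mid |G| = p^n$, which is no contradiction — so I need a sharper count.

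The sharper input is that $(X^n, d_{\Ham})$ contains $\emph{two}$ such lines meeting only at the base point, and more relevantly it contains a subspace isometric to $(X^2, d_{\Ham})$ when $n \ge 2$. I would instead argue as follows: the subset of $X^n$ supported on the first two coordinates is isometric to $(X^2, d_{\Ham})$, which by the case $n=2$ of the proposition (to be handled separately, or rather: this is where the hypothesis $(p,n)\ne(2,2)$ enters) cannot carry a cyclic structure, yet $G$ cyclic would restrict to ... this is circular, so the honest route is: the two lines through $e$ generate, inside $G$, a subgroup that contains $\Z_p \times \Z_p$ (the two lines commute elementwise since their difference set lands in weight-$\le 2$ elements and a direct check using translation invariance of $d_G$ shows the subgroup they generate is elementary abelian of rank $2$ — this uses $n \ge 2$), call it $K \simeq \Z_p^2$. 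Now $G \simeq \Z_{p^n}$ contains both $K \simeq \Z_p^2$ and a cyclic subgroup $\Z_{p^n}$, so Lemma~\ref{SubgroupsIso} with $k=2$, $\ell = n$ forces $p^{n+1} \mid p^n$, a contradiction. The main obstacle — and the place requiring genuine care — is proving that the two coordinate-lines through $e$ generate an elementary abelian $p$-group rather than something larger or non-abelian: one must use that every element obtained by adding one vector from each line has weight exactly $2$ (it differs from each line in a single coordinate), that sums within a line stay at weight $\le 1$ hence, by the size-$p$ argument above, within the line, and that translation invariance propagates these weight constraints to all iterated sums; this is exactly the point where the excluded case $(p,n)=(2,2)$ shows the argument is not vacuous, since there $\Z_2 \times \Z_2$ genuinely does give a representation and no contradiction arises. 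The remaining verifications (that $\varphi$ from Proposition~\ref{transfer} identifies weights with Hamming weights, that prime-order subtraction-closed sets are subgroups) are routine.
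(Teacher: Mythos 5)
Your overall strategy---locating two incompatible subgroups and invoking Lemma \ref{SubgroupsIso}---superficially resembles the paper's, but you place both subgroups inside the putative cyclic group $G$ itself, and the step that would produce them there has a genuine gap. Your key claim is that the pullback under the isometry of a coordinate line through the base point is a subgroup of $G$ isomorphic to $\Z_p$. The justification offered (each difference $g_j-g_k$ has weight $1$, ``hence the set is closed under subtraction'') is a non sequitur: the weight-$1$ sphere of $(X^n,d_{\Ham})$ has $n(p-1)$ elements, of which the line supplies only $p-1$, so $w(g_j-g_k)=1$ does not place $g_j-g_k$ back in the line once $n\ge 2$. In fact the claim is false in the one case where a cyclic representation exists: under the Gray map ($p=n=2$) the two lines through $(0,0)$ pull back to $\{0,1\}$ and $\{0,3\}$ in $\Z_4$, neither of which is a subgroup; since your argument for this step nowhere uses $(p,n)\neq(2,2)$, it cannot be correct as stated. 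The second step suffers the same defect in amplified form: you say a sum of one element from each line ``differs from each line in a single coordinate,'' but the isometry matches only distances, not coordinates---addition in $G$ has no a priori relation to the coordinate structure of $X^n$---so all you really have are weight bounds like $w(g+h)\le 2$, which do not force the two lines to generate an elementary abelian group of rank $2$. (Incidentally, had you obtained $\Z_p\times\Z_p\le G$ with $G$ cyclic, Lemma \ref{SubgroupsIso} would be unnecessary: a cyclic group has a unique subgroup of order $p$.)

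For contrast, the paper avoids extracting group structure from the metric altogether: it works inside the symmetry group $\Gamma(X^n,d_{\Ham})\simeq \Sym_p\wr\Sym_n$, which contains both the hypothetical regular $\Z_{p^n}$ and the natural translation subgroup $\Z_p^n$; Lemma \ref{SubgroupsIso} applied to $\Gamma$ forces $p^{2n-1}\mid (p!)^n\, n!$, which Legendre's formula rules out except when $p=2$ and $n$ is a power of $2$, and the residual case $p=2$ is handled by bounding element orders in $\Z_2^n\rtimes\Sym_n$ via Landau's function. To salvage your route you would need an argument of comparable strength---something beyond translation invariance and weight inequalities---to manufacture the second subgroup, and it is precisely such an argument that is missing.
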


\begin{proof}
The Hamming space $X^n$ has a cyclic representation if and only if the symmetry group has an element of order $p^{n}$, such that the subgroup generated by this element acts regularly. 
It is known that the symmetry group of the Hamming space is (see for instance \cite{Cam} or \cite{PP})
$$\Gamma (X^n,d_{\Ham})\simeq \Sym_p \wr \Sym_n = (\Sym_p)^n \rtimes \Sym_n,$$ 
where $\wr$ denotes wreath product. 
Assume that there exists a cyclic representation of $(X^n,d_{\Ham})$. We have that $\mathbb Z_{p^{n}} \subsetneq \Gamma(X^n,d_{\Ham})$ 
and also that $\mathbb{Z}_p^{n} \subsetneq \Gamma (X^n,d_{\Ham})$. Thus, by Lemma \ref{SubgroupsIso}, 
$p^{n+n-1}$ must divide $|\Gamma(X^n,d_{\Ham})|$, that is 
$$ p^{2n-1} \mid (p!)^{n}n!$$
On the other hand, note that if $\nu_p$ denotes the $p$-adic valuation, we have 
$$ \nu_p((p!)^{n}n!) = n \nu_p(p!) + \nu_p(n!) = n \nu_p(p) + \nu_p(n!) = n+\nu_p(n!). $$
Now, suppose that $n=n_0 + n_1 p + n_2p^{2} + \cdots + n_rp^{r}$ 
is the $p$-adic expansion of $n$, and let $s_p(n) = n_0 +n_1 + \cdots+n_r$. Then, the Legendre formula for the $p$-adic valuation of $n!$ implies that $\nu_p(n!)=\frac{n-s_p(n)}{p-1}$. 
Then we have that
\begin{equation} \label{desigualdad}
\nu_p((p!)^{n}n!) = n + \tfrac{n-s_p(n)}{p-1} \leq n+ n-1 = 2n-1.
\end{equation} 
 Moreover, the equality holds in \eqref{desigualdad} if and only if $p=2$ and
 $n=2^{k}$ for some $k$.
 
It only remains to prove the case $p=2$. 
It is enough to show that if 
$$g\in \Gamma (X^n,d_{\Ham}^{n}) \simeq \mathbb Z_2^{n} \rtimes \Sym_n$$ 
then its order satisfies $|g|<2^n$. 
We recall Landau's function $G(n)=\max \{ord(\sigma) : \sigma \in \Sym_n\}$ and the known bound $G(n) \le e^{\frac ne}$. 
Now, if $g=(t,s)$ with $t\in \mathbb Z_2^{n}$ and $s\in \Sym_n$ then we have 
 $|g|\leq |t||s|\leq 2e^{\frac{n}{e}}$, by the bound on Landau's function. 
In particular if $n> 2$,  
$$|g|\leq 2e^{\frac{n}{e}}<2^n,$$ 
 and hence there is no element of order $2^{n}$ in $\Gamma (X^n,d_{\Ham}^{n})$.
\end{proof}

\section{Isometric embeddings}\label{Isometric embeddings}
We begin with the following definition.
\begin{defi}
A map $\varphi : (X_1,d_1) \rightarrow (X_2,d_2)$ between metric spaces is an \textit{isometric embedding}
if it is injective and preserves distances.
That is, for every $x,y \in X_1$ we have 
\begin{equation*} 
d_1(x,y) = d_2(\varphi(x), \varphi(y)). 
\end{equation*}
\end{defi}

As we previously mentioned, for any fixed $m$, the cyclic group $\Z_m$ cannot be isometric to any Hamming space $(X^n,d_{\Ham}^n)$ 
where $m=|X^n|$. However, there are isometric embeddings of $\Z_{p^k}$ into the Hamming space $\Z_{p}^{p^{k-1}}$ with $p$ prime due to Carlet (\cite{Ca}, $p=2$), Greferath and Yildiz--\"Ozger (\cite{Greferath} and \cite{YO}, any prime), thus generalizing the Gray map. Namely, we have the following result. 
 
\begin{thm}[\cite{Ca}, \cite{Greferath}, \cite{YO}] \label{IsoSi}
Let $p$ be a prime and $k > 1$, then there exists an isometric embedding from $(\mathbb Z_{p^{k}}, d)$ to 
$(\mathbb \Z_{p}^{p^{k-1}}, d_{\Ham}^{p^{k-1}})$.
\end{thm}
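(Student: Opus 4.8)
The plan is to construct the embedding explicitly and then verify it preserves distances by induction on $k$, mimicking and generalizing the classical Gray map construction. The key idea is that elements of $\Z_{p^k}$ can be written $p$-adically, and the embedding should exploit a recursive structure: split $\Z_{p^{k}}$ according to residue mod $p$ (or mod $p^{k-1}$) and define the image of each residue class by combining the embedding at level $k-1$ with a ``carry'' pattern encoded in the Hamming space. Concretely, for $a \in \Z_{p^k}$ write $a = a_0 + p a'$ with $a_0 \in \{0,\dots,p-1\}$ and $a' \in \Z_{p^{k-1}}$; then I would define $\varphi_k(a) \in \Z_p^{p^{k-1}}$ by taking $p$ copies of $\varphi_{k-1}(a') \in \Z_p^{p^{k-2}}$ and adding to them a fixed ``staircase'' vector depending only on $a_0$ that distributes the $a_0$ increments across the $p$ blocks. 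The base case $k=1$ is the identity on $\Z_p$ (or the relabeling giving the Gray map when $p=2$).

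First I would fix notation for the $p$-adic digits and carefully write down the recursive formula for $\varphi_k$, making sure the block lengths match ($p \cdot p^{k-2} = p^{k-1}$). Second, I would pin down the weight function $d$ on $\Z_{p^k}$ implicitly: rather than specifying $d$ in advance, the cleanest route is to \emph{define} $d(a,b) := d_{\Ham}^{p^{k-1}}(\varphi_k(a),\varphi_k(b))$, i.e.\ take the pullback metric from \eqref{fd*}, and then observe that this $d$ is translation invariant (because $\varphi_k$ will be a group homomorphism, or at least $\varphi_k(a+b)$ differs from $\varphi_k(a)+\varphi_k(b)$ in a controlled way). With that move, ``preserving distances'' becomes tautological once injectivity is established; the substance shifts to proving $\varphi_k$ is \emph{injective} and that the resulting $d$ is a genuine metric (in particular that it is the ``expected'' weight, matching the homogeneous/Lee-type weight so the statement is not vacuous). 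Third, injectivity: I would argue that if $\varphi_k(a) = \varphi_k(b)$ then comparing the block structure forces $a_0 = b_0$ (the staircase vectors for distinct $a_0$ are in distinct cosets of the diagonal-block subspace), whence $\varphi_{k-1}(a') = \varphi_{k-1}(b')$ and induction finishes it.

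The main obstacle I anticipate is \textbf{the distance computation itself} — showing that the Hamming weight of $\varphi_k(a)$ behaves correctly under the recursion, i.e.\ that the ``carry'' contributions across the $p$ blocks add up as they should and that no unexpected cancellations occur when one computes $d_{\Ham}^{p^{k-1}}(\varphi_k(a),\varphi_k(b))$. This requires understanding $\varphi_k(a) - \varphi_k(b)$ coordinatewise: the difference of the two staircase vectors (depending on $a_0 - b_0 \bmod p$) must interact cleanly with the $p$-fold repetition of $\varphi_{k-1}(a'-b')$. The technical heart is a lemma computing the Hamming weight of such a superposition as a function of $a_0-b_0$ and the level-$(k-1)$ weight of $a'-b'$, and checking it satisfies the same recursion as the target weight on $\Z_{p^k}$. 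Once that weight recursion is established, the triangle inequality and positivity of $d$ follow for free (pullback of a metric along an injection is a metric), and isometric embedding is immediate. Since the cited references (\cite{Ca}, \cite{Greferath}, \cite{YO}) already carry out versions of this, I would at this point simply invoke them for the statement as written, and reserve the explicit recursive construction above for the \emph{generalization} in Theorem~\ref{teo emb}, where the subgroup $H$ replaces the role of $p\Z_m / $ the prime-power structure.
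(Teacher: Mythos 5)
The paper itself gives no proof of Theorem~\ref{IsoSi}: the statement is quoted from \cite{Ca}, \cite{Greferath}, \cite{YO}, and what the paper actually proves is the generalization, Theorem~\ref{teo emb}, which recovers Theorem~\ref{IsoSi} as a special case in Remark~\ref{YO recover}. Your plan is correct in outline but takes a genuinely different route, essentially Carlet's original digit-recursive Gray construction: write $a=a_0+pa'$, repeat $\varphi_{k-1}(a')$ in $p$ blocks and add a staircase vector indexed by $a_0$, then prove a weight-recursion lemma by induction on $k$. The paper instead uses a single, non-recursive construction valid for any cyclic group $\Z_m$ and any subgroup $H$ of index $n$: $t\mapsto (1+\rho+\cdots+\rho^{t-1})\cdot he_i$ with $\rho$ an $n$-cycle, injectivity checked by writing $t=qn+r$ (Lemma~\ref{1-1}), and translation invariance obtained algebraically by lifting the map to a homomorphism $\Z_m\to H^n\rtimes\Sym_n\subseteq\Gamma(H^n,d_{\Ham}^n)$, after which the plateau (extended Lee) weight \eqref{weightexplicit} falls out directly, with no induction and no carry analysis. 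This difference is exactly where your one soft spot sits: your $\varphi_k$ is \emph{not} a group homomorphism (the Gray map is nonlinear), so the translation invariance of the pullback metric --- which is what keeps the statement from being vacuous, since \emph{any} injection carries a pullback metric --- requires the quasi-homomorphism identity $\varphi_k(a+c)=\sigma_c(\varphi_k(a))+\varphi_k(c)$ for some Hamming isometry $\sigma_c$, i.e.\ precisely the $H^n\rtimes\Sym_n$ mechanism of the paper, or else the explicit weight lemma you defer as ``the main obstacle.'' Your recursion does work and yields the same plateau weight, but that deferred lemma is the real content of the proof; the paper's approach buys it for free and in greater generality (arbitrary $m$ and arbitrary $H$), while yours stays closer to the classical Gray-map picture and keeps the $p$-adic digit structure visible.
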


In this section we will generalize the previous result to any cyclic group. More precisely, we will see that, for any $m \in \N$, it is always possible to isometrically embed $\Z_m$ into a Hamming space
 $X^n$ with $m < |X^n|$ for some $n$. For simplicity we will denote this by
$$\Z_m \hookrightarrow (X^n,d_{\Ham}^n).$$

\sk 
Let $G=\Z_m=\{0,1,\ldots,m-1\}$ and $H$ be a subgroup of index $n$, i.e.\@ $n=[G:H]$.
Consider $v\in H^n$ and $\rho \in \Sym_n$.
We define the map (well-defined since $H$ is a group)
$$\Psi_{v,\rho} : \Z_m \rightarrow H^n, \qquad t \mapsto \Psi_t(\rho) \cdot v$$ 
where $\rho \cdot v$ is the action $\rho(v_1,\ldots,v_n ) = (v_{\rho(1)},\ldots,v_{\rho(n)})$ and 
$$\Psi_t(x) = \tfrac{x^t-1}{x-1} = x^{t-1} + \cdots + x+1.$$  

In the previous notations, we have the following.
\begin{lema} \label{1-1}
Let $H$ be a subgroup of $G=\Z_m$ of index $n$.
Suppose $H=\langle h \rangle$ and $e_i=(0,\ldots,1,\ldots,0) \in \Z_m^n$ with $1$ in the $i$-th coordinate. 
If $v=he_i$ and $\rho$ is an $n$-cycle then $\Psi_{v,\rho}$ is injective. 
\end{lema}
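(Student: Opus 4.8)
The plan is to write $\Psi_{v,\rho}$ out explicitly, coordinate by coordinate, and then recover the input $t$ from the vector $\Psi_{v,\rho}(t)$. Reading $\Psi_t(\rho)=1+\rho+\cdots+\rho^{t-1}$ as an element of the integral group ring of $\Sym_n$ acting coordinatewise on $H^n$, and using that this action sends $e_i$ to $e_{\rho^{-1}(i)}$, one obtains
\[
\Psi_{v,\rho}(t)=\sum_{j=0}^{t-1}\rho^{j}\cdot(h\,e_i)=\sum_{j=0}^{t-1} h\,e_{\rho^{-j}(i)}\in H^n .
\]
Since $\rho$ is an $n$-cycle, $\rho^{-j}(i)$ depends only on $j$ modulo $n$ and runs bijectively over $\{1,\dots,n\}$ as $j$ runs over any block of $n$ consecutive integers; hence the $k$-th coordinate of $\Psi_{v,\rho}(t)$ equals $c_k(t)\,h$ with $c_k(t)=\#\{\,0\le j\le t-1:\rho^{-j}(i)=k\,\}$. (The values really do lie in $H^n$ because $H$ is closed under addition, which is the well-definedness clause in the statement.)

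Next I would compare two inputs. Suppose $\Psi_{v,\rho}(s)=\Psi_{v,\rho}(t)$ with $0\le s\le t\le m-1$, and assume toward a contradiction that $s<t$, so $N:=t-s$ satisfies $1\le N\le m-1$. Subtracting the two equal vectors gives $\sum_{j=s}^{t-1}h\,e_{\rho^{-j}(i)}=0$ in $H^n$, so for every coordinate $k$ the integer $c_k:=\#\{\,s\le j\le t-1:\rho^{-j}(i)=k\,\}$ satisfies $c_k\,h=0$, i.e.\ $\operatorname{ord}(h)\mid c_k$. Now a block of $N$ consecutive integers distributes its residues modulo $n$ as evenly as possible: writing $N=an+b$ with $0\le b<n$, exactly $b$ of the coordinates $k$ have $c_k=a+1$ and the remaining $n-b$ have $c_k=a$.

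To conclude, split on $b$. If $b\ge 1$ then both values $a$ and $a+1$ are attained (as $0<b<n$), so $\operatorname{ord}(h)$ divides both $a$ and $a+1$, hence divides $1$; but $\operatorname{ord}(h)=|H|=m/n\ge 2$ since $H$ is a nontrivial subgroup of $\Z_m$, a contradiction. If $b=0$ then $N=an$ with $a\ge 1$, and $\operatorname{ord}(h)=m/n$ divides $a$, forcing $N=an\ge (m/n)\,n=m$, contradicting $N\le m-1$. Either way we reach a contradiction, so $s=t$, and therefore $\Psi_{v,\rho}$ is injective.

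The one delicate point is the counting in the previous paragraph: one must check carefully that a window of $N=an+b$ consecutive integers meets exactly $b$ residue classes modulo $n$ with multiplicity $a+1$ and the other $n-b$ classes with multiplicity $a$, and that — because $\rho$ is a single $n$-cycle — this pattern passes without change to the coordinates $\rho^{-j}(i)$. The remaining ingredients (the group-ring reading of $\Psi_t(\rho)\cdot v$, the telescoping identity $\Psi_{v,\rho}(t)-\Psi_{v,\rho}(s)=\sum_{j=s}^{t-1}\rho^{j}\cdot v$, and the closing divisibility arithmetic) are routine.
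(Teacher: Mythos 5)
Your proof is correct and follows essentially the same route as the paper's: both expand $\Psi_{v,\rho}(t)=\sum_{j=0}^{t-1}\rho^{j}\cdot(he_i)$ and use that an $n$-cycle distributes a block of consecutive exponents over the coordinates as evenly as possible, so the image determines the quotient and remainder of $t$ modulo $n$. The only difference is presentational: the paper computes both images in the form $(q+1)\sum_{k<r}he_{\rho^k(i)}+q\sum_{k\ge r}he_{\rho^k(i)}$ and asserts the conclusion, while you work with the difference $t-s$ and spell out the divisibility step via $\operatorname{ord}(h)=m/n$ (making explicit the harmless implicit assumption that $H$ is nontrivial), which the paper leaves to ``we can see that''.
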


\begin{proof}
For $0\leq t <m$, if $t=qn +r$ with $0\leq r < n$, then using that $\rho(e_i) = e_{\rho(i)}$ we have
\[
\Psi_{v,\rho}(t) = \sum_{k=0}^{t-1} \rho^k(he_i)= (q+1) \sum_{k=0}^{r-1} h \, e_{\rho^{k}(i)} + q \sum_{k=r}^{n-1} h \, e_{\rho^{k} (i)}.
\]
Now, for $0\leq s < m$, if $s=q'n +r'$ with $0\leq r' < n$, we can see that $\Psi_{v,\rho}(s)= \Psi_{v,\rho}(t)$ if and only if $q=q'$, and $r=r'$, that is, only if $s=t$. Therefore $\Psi_{v,\rho}$ is injective.
\end{proof}

We now generalize Theorem \ref{IsoSi} to $\Z_m$, with $m$ any positive integer.
\begin{thm} \label{teo emb}
Let $H$ be a subgroup of $G=\Z_m$ of index $n$.
Consider $v=he_i \in H^n$ with $H=\langle h \rangle$, $1\le i \le n$, and $\rho \in \Sym_n$ an $n$-cycle.
Then we have the isometric embedding 
\begin{equation} \label{isom embedding}
\Psi_{v,\rho} : (\Z_m, d_n) \hookrightarrow (H^n, d_{\Ham}^n),
\end{equation}
where $d_n$ is the translation invariant metric 
with associated weight given by
\begin{equation} \label{weightexplicit}
w_{n}(t) =
\begin{cases}
t  & \quad \text{if } \; t \leq n, \\
n  & \quad \text{if } \; n \leq t \leq m-n, \\
m-t & \quad \text{if } \;  m-n \leq t \leq m-1.\\
\end{cases}
\end{equation}
\end{thm}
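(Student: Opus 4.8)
The plan is to verify that $\Psi_{v,\rho}$ is an isometric embedding from $(\Z_m, d_n)$ to $(H^n, d_{\Ham}^n)$ by (a) showing injectivity, (b) computing the Hamming weight of $\Psi_{v,\rho}(t)$ and matching it with $w_n(t)$, and (c) reducing the general distance statement to a statement about weights using translation invariance on both sides. Step (a) is already done in Lemma~\ref{1-1}, since the hypotheses here ($v = he_i$, $\rho$ an $n$-cycle, $H = \langle h\rangle$) are exactly those of that lemma. So the real content is (b) and (c).

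For step (b): from the formula in the proof of Lemma~\ref{1-1}, writing $t = qn + r$ with $0 \le r < n$, we have
\[
\Psi_{v,\rho}(t) = (q+1)\sum_{k=0}^{r-1} h\, e_{\rho^k(i)} + q \sum_{k=r}^{n-1} h\, e_{\rho^k(i)}.
\]
Since $\rho$ is an $n$-cycle, the indices $\rho^0(i), \ldots, \rho^{n-1}(i)$ are a permutation of $1,\ldots,n$, so this vector has, in its $n$ coordinates, exactly $r$ entries equal to $(q+1)h$ and $n-r$ entries equal to $qh$. The number of nonzero coordinates is therefore: $r$ if $q = 0$ (i.e.\ $t \le n$, and in fact $t < n$, with the $t=n$ case giving $q=1,r=0$ so all $n$ coordinates equal $h \ne 0$), and $n$ if $q \ge 1$, \emph{except} possibly when some of the entries $qh$ or $(q+1)h$ vanish in $\Z_m$. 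Here one uses $|H| = m/n$, so $h$ has order $m/n$ in $\Z_m$, hence $qh = 0$ in $\Z_m$ iff $(m/n) \mid q$ iff $qn$ is a multiple of $m$; since $0 \le t < m$ we have $0 \le q < m/n$, so $qh = 0$ only when $q = 0$, and $(q+1)h = 0$ only when $q + 1 = m/n$, i.e.\ $t \ge m - n$. Assembling these cases gives exactly $w_H(\Psi_{v,\rho}(t)) = w_n(t)$ as in \eqref{weightexplicit}: for $t \le n$ we get $t$ nonzero coordinates; for $n \le t \le m-n$ all $n$ coordinates are nonzero; and for $m - n \le t \le m - 1$ (so $q + 1 = m/n$, the top block of $(q+1)h = 0$ entries has size $r = t - (m-n)$, leaving $n - r = m - t$ nonzero coordinates).

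For step (c): $d_{\Ham}^n$ is translation invariant on $H^n$ and $\Psi_{v,\rho}$ is \emph{not} a homomorphism, so one cannot directly say $d_{\Ham}^n(\Psi(s),\Psi(t)) = w_H(\Psi(s) - \Psi(t))$ and then $\Psi(s) - \Psi(t) = \Psi(s-t)$. Instead I would argue directly on the structure of the vectors: for $s = q'n + r'$ and $t = qn + r$, compute $\Psi_{v,\rho}(s) - \Psi_{v,\rho}(t)$ coordinatewise. In coordinate $\rho^k(i)$, the entry of the difference is $a_k h - b_k h$ where $a_k \in \{q'+1, q'\}$ and $b_k \in \{q+1, q\}$ depending on whether $k < r'$ resp.\ $k < r$; so each entry is $ch$ with $c \in \{q'-q-1, q'-q, q'-q+1\}$. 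The number of nonzero such entries, computed over the three index ranges determined by $r, r'$, and reduced mod $\mathrm{ord}(h) = m/n$, must be shown to equal $w_n(t - s)$ (equivalently $w_n(s-t)$, which equals $w_n(t-s)$ by the symmetry $w_n(u) = w_n(m-u)$ built into \eqref{weightexplicit}). This is a finite case check organized by the sign of $q' - q$ and the ordering of $r, r'$, and it is exactly parallel to the weight computation in step (b) with $t$ replaced by $t - s \pmod m$. Finally, since $d_n$ is \emph{defined} as the translation invariant metric with weight $w_n$, we have $d_n(s,t) = w_n(t-s)$, and the chain of equalities closes.

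\textbf{Main obstacle.} The bookkeeping in step (c): showing that the coordinatewise differences of $\Psi_{v,\rho}(s)$ and $\Psi_{v,\rho}(t)$ vanish in precisely $n - w_n(t-s)$ coordinates requires carefully tracking the three index blocks (below $\min(r,r')$, between $r$ and $r'$, above $\max(r,r')$) and the "wrap-around" cases where a coefficient $c$ is a nonzero multiple of $m/n$ in $\Z$ but zero in $\Z_m$ — these wrap-around cases are what produce the two sloped pieces of $w_n$ and must be matched exactly against the three-case definition of $w_n$ after reducing $t - s$ modulo $m$. Everything else is routine; one could also streamline (c) by first proving the auxiliary identity $\Psi_{v,\rho}(t) - \Psi_{v,\rho}(s) = \rho^s\!\cdot(\text{something related to }\Psi_{v,\rho}(t-s))$ when $s \le t$, turning (c) into a consequence of (b), but I expect the direct case analysis to be cleaner to write.
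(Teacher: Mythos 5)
Your proposal is correct in substance, but it organizes the argument differently from the paper, and the part you flag as the ``main obstacle'' is exactly the part the paper eliminates. The paper does not fix $d_n(s,t):=w_n(t-s)$ in advance and then verify distance preservation pair by pair; instead it \emph{defines} $d_n$ as the pullback of $d_{\Ham}^n$ under $\Psi_{v,\rho}$ (so preservation of distances holds by construction), and then proves that this pullback is translation invariant. The key tool is the map $\tilde{\Psi}_{v,\rho}:\Z_m\to H^n\rtimes \Sym_n$, $t\mapsto(\Psi_t(\rho)\cdot v,\rho^t)$, which is a group homomorphism; this yields the identity $\Psi_{v,\rho}(a+c)=\rho^{c}\Psi_{v,\rho}(a)+\Psi_{v,\rho}(c)$, and since $H^n\rtimes\Sym_n$ sits inside $\Gamma(H^n,d_{\Ham}^n)\simeq \Sym_H^n\rtimes\Sym_n$, the map $x\mapsto \rho^{c}x+\Psi_{v,\rho}(c)$ is a Hamming isometry, giving $d_n(a+c,b+c)=d_n(a,b)$. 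Equivalently, $\Psi_{v,\rho}(t)-\Psi_{v,\rho}(s)=\rho^{s}\Psi_{v,\rho}(t-s)$, so the general distance reduces in one line to the weight of $\Psi_{v,\rho}(t-s)$; this is precisely the ``streamlining'' you mention at the end but decline to pursue. After that reduction, the only computation left is your step (b), which matches the paper's weight calculation (the paper states it more tersely, but your case analysis with $qh=0$ iff $q=0$ and $(q+1)h=0$ iff $q+1=m/n$ is the correct justification of \eqref{weightexplicit}). By contrast, your preferred route in step (c) — the three-block, sign-of-$q'-q$, wrap-around case check — is workable but is left as a sketch rather than carried out, and it is substantially heavier than the twisted-homomorphism identity that makes it unnecessary; note also that with that identity in hand you get for free that $w_n$ genuinely defines a metric (it is the weight of an injective pullback of $d_{\Ham}^n$), a point your formulation ``$d_n$ is defined as the translation invariant metric with weight $w_n$'' quietly presupposes. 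So: same core computation, but the paper's semidirect-product argument buys you translation invariance and the reduction of distances to weights without any bookkeeping, whereas your direct approach would need the full case analysis written out to be complete.
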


\begin{proof}
Consider $\tilde{\Psi}_{v,\rho} : \Z_m \rightarrow H^n\rtimes \Sym_n$ given by $t \mapsto (\Psi_t(\rho) \cdot v, \rho^t)$ and notice that it is a homomorphism. In fact, given $t,s \in \Z_m$ we  have 
\begin{eqnarray} \label{psi}
\tilde{\Psi}_{v,\rho}(t) \tilde{\Psi}_{v,\rho}(s) &=& ( \Psi_t(\rho)v, \rho^t ) \, (\Psi_s(\rho)v,\rho^s) \nonumber  \\
&=& (\rho^s \Psi_t(\rho)v + \Psi_s(\rho)v, \rho^t\rho^s) \nonumber  \\
&=& \big( \rho^s (\rho^{t-1} + \cdots + \rho + 1)v + (\rho^{s-1} + \cdots + \rho + 1)v, \rho^{t+s} \big) \\
&=& \big( (\rho^{t+s-1} + \cdots + \rho+1)v,\rho^{t+s} \big) \nonumber \\ 
&=& (\Psi_{t+s}(\rho)v, \rho^{t+s}) = \tilde{\Psi}_{v,\rho}(t+s). \nonumber 
\end{eqnarray}
Further, if $t+s\equiv u \pmod m$, then $\Psi_{v,\rho}(t+s)=\Psi_{v,\rho}(u)$ and $\rho^{t+s}=\rho^{u}$, and hence we get $\tilde{\Psi}_{v,\rho}(s+t)=\tilde{\Psi}_{v,\rho}(u)$.

\sk  
Note that $\Psi_{v,\rho} = \pi \circ \tilde{\Psi}_{v,\rho}$, so we have the following diagram
\begin{equation} \label{diagram}
\begin{tikzcd}
\Z_m \arrow[r, "\tilde{\Psi}_{v,\rho}"] \arrow[dr, "\Psi_{v,\rho}"']
& H^n \rtimes \Sym_n  \arrow[d,"\pi"]\\
& H^{n} .
\end{tikzcd}
\end{equation}
Now, $\Psi_{v,\rho}$ is 1-1 by Lemma \ref{1-1} and hence $\tilde\Psi_{v,\rho}$ is also injective.

\sk 
Denote $\Psi_{v,\rho}$ by $\Psi$ and let $d_n$ be the pull-back metric in $\Z_m$ of the Hamming metric in $H^n$, that is
$$d_n(a,b) = d_{\Ham}^n(\Psi(a),\Psi(b)).$$
Hence $\Psi$ preserves the metric $d_n$ by definition.

\sk 
We now prove that $d_n$ is translation invariant. 
Note that $H^n \rtimes \Sym_n \subsetneq \Sym_H^n \rtimes \Sym_n$, since $H \subsetneq \Sym_H$ by Cayley's Theorem, and 
$\Gamma(H^n,d_{\Ham}^n) \simeq \Sym_H^n \rtimes \Sym_n$. Thus, we have 
\begin{equation} \label{cay}
H^n \rtimes \Sym_n \subsetneq \Gamma(H^n,d_{\Ham}^n). 
\end{equation} 
In this way, for every $a,b,c \in \Z_m$ we have
\begin{eqnarray*}
d_n(a+c,b+c) &=& d_{\Ham}^n(\Psi(a+c),\Psi(b+c)) \\ 
&=& d_{\Ham}^n \big(\rho^{c} \Psi(a)+ \Psi(c) , \rho^{c} \Psi(b)+ \Psi(c) \big) \\ 
&=&  d_{\Ham}^n(\Psi(a), \Psi(b)) = d_n(a,b),
\end{eqnarray*}
where in the second equality we have used \eqref{cay} and that $\Psi(a+c)=\rho^{c} \Psi(a)+ \Psi(c)$, deduced from  \eqref{psi}.

\sk 
Finally, we check the weights.
For $t \in \Z_m$ we have 
$$w_n(t) = d_{\Ham}^n(\Psi(t),0) = w_{\Ham}(\Psi(t) ) = w_{\Ham}((\rho^{t-1}+\cdots + 1) \cdot he_i).$$
Thus, considering $t=qn+r$, with $0\leq r < n$, we arrive at
$$w_n(t) =  w_{\Ham} \big( (q+1) \sum_{k=0}^{r-1} h \, e_{\rho^{k}(i)} + q \sum_{k=r}^{n-1} h \, e_{\rho^{k} (i)} \big), $$
from which \eqref{weightexplicit} readily follows.
\end{proof}

In the situation of the previous theorem, there are $\phi(\tfrac mn) n!$ different isometric embeddings $\Psi_{he_i,\rho}$, 
where $\phi$ is the Euler totient function. Indeed, there are $n$ vectors $e_i$, $(n-1)!$ different $n$-cycles $\rho$ 
and $\phi(\tfrac mn)$ different generators $h$ of $H$ of index $n$ in $\Z_m$.
However, all these maps have the same associated metric $d_n$.

\begin{rem} \label{YO recover}
Let $G=\Z_{p^k}$ with $p$ prime and for $1\le i \le k-1$ consider the subgroup $H_i=\Z_{p^i}$ of index $n_i=p^{k-i}$. 
By the previous theorem, there is an isometric embedding 
$$(\Z_{p^k}, d_{n_i}) \hookrightarrow \big( (\Z_{p^i})^{p^{k-i}}, d_{\Ham}^{\, p^{k-i}} \big)$$
determined by $\Psi_{e_j,\rho}$ for any $e_j \in H_i^{n_i}$ and any $n_i$-cycle $\rho$ in $\Sym_{n_i}$.
In particular, if we take $H_1=\Z_p$ , $v=e_1=(1,0,\ldots,0)$ and $\rho = (12\cdots n_1)$ then the weight $w_{n_1}$ becomes the extended Lee weight over $\Z_{p^k}$ 
$$ w_{_L}(x) = \begin{cases}
x 		 & \quad \text{if } \; x \leq p^{k-1}, \\
p^{k-1}  & \quad \text{if } \; p^{k-1} \le x \le p^{k}-p^{k-1}, \\
p^{k}-x  & \quad \text{if } \; p^{k}-p^{k-1} \le x \leq p^{k}-p^{k}-1.\\
\end{cases}$$
and thus we recover the isometric embedding 
$(\Z_{p^k}, d_{_L}) \hookrightarrow (\Z_p^{p^{k-1}},d_{\Ham}^{\, p^{k-1}})$ previously given by Yildiz--\"Ozger (\cite{YO}, Theorem 2.1).
\end{rem}

\begin{rem}
Consider $G=\Z_m$. One could want $H^n$ to be of the least possible size such that $\Psi$ is close to be a bijective embedding 
(i.e.\@ an isometry). In this case, we must choose $H$ minimizing $|H|^{n}$. On the other hand, if we want to minimize the dimension of the Hamming space of the embedding, we should choose $H$ to be the subgroup of maximum cardinality.

For instance, let $G = \Z_{{p_1}^{k_1}{p_2}^{k_2}}= \Z_{{p_1}^{k_1}} \times \Z_{{p_2}^{k_2}}$ where $p_1 < p_2$ are different primes. 
By Theorem~\ref{teo emb} and choosing $H=\Z_{{p_1}^{k_1-1} {p_2}^{k_2}}$ in order to minimize the size of the embedding space, we have the isometric embedding 
$$\Z_{p_1^{k_1} p_2^{k_2}} \hookrightarrow \big( (\Z_{p_1^{k_1-1}  p_2^{k_2}})^{p_1}, d_{\Ham}^{\, p_1} \big) .$$
On the other hand, we can apply the same theorem to $\Z_{{p_1}^{k_1}}$ and $\Z_{{p_2}^{k_2}}$ separately and then concatenate the spaces obtaining the isometric embedding 
$$ \Z_{p_1}^{k_1} \times \Z_{p_2}^{k_2} \hookrightarrow 
\big( (\Z_{p_1^{k_1-1}})^{p_1} \times (\Z_{p_2^{k_2-1}})^{p_2}, d_{\Ham}^{\, p_1}\times d_{\Ham}^{\,p_2} \big ).$$ 
However, note that although in general the second group is smaller, we must pay the price that the coordinates have different alphabets.
\end{rem}

\begin{rem} \label{chars}
It is possible to isometrically embed a ring into rings of different characteristics.
In fact, if $G=\Z_{pq}$ with $p,q$ primes, by Theorem \ref{teo emb} we have $\Z_{pq} \hookrightarrow ((\Z_p)^q, d_{\Ham}^{\, q})$ and   
$\Z_{pq} \hookrightarrow ((\Z_q)^p, d_{\Ham}^{\,p})$.
\end{rem}

\begin{ej}
Let $G=\Z_{12}$, we can consider the four subgroups $H_1 \simeq \Z_2$, $H_2 \simeq \Z_3$, 
$H_3 \simeq \Z_4$ and $H_4 \simeq \Z_6$ with corresponding indices $n_1=6$, $n_2=4$, $n_3=3$ and $n_4=2$. 
Thus, by Theorem \ref{teo emb} we have 
the four isometric embeddings 
$\Z_{12} \hookrightarrow (H_i^{n_i}, d_{\Ham}^{\, n_i})$ for $i=1,2,3,4$. 
That is 
$$\Z_{12} \hookrightarrow (\Z_2^6, d_{\Ham}^{6}), \quad \Z_{12} \hookrightarrow (\Z_3^4, d_{\Ham}^{4}), 
\quad \Z_{12} \hookrightarrow (\Z_4^3, d_{\Ham}^{3}), \quad \Z_{12} \hookrightarrow (\Z_6^2, d_{\Ham}^{2}).$$
By \eqref{weightexplicit}, we have the following weight distributions 
$$\begin{tabular}{|c|cccccccccccc|}
\hline
 $t$     & 0 & 1 & 2 & 3 & 4 & 5 & 6 & 7 & 8 & 9 & 10 & 11 \\ \hline
$w_1(t)$ & 0 & 1 & 2 & 3 & 4 & 5 & 6 & 5 & 4 & 3 & 2  &  1 \\
$w_2(t)$ & 0 & 1 & 2 & 3 & 4 & 4 & 4 & 4 & 4 & 3 & 2  &  1 \\
$w_3(t)$ & 0 & 1 & 2 & 3 & 3 & 3 & 3 & 3 & 3 & 3 & 2  &  1 \\
$w_4(t)$ & 0 & 1 & 2 & 2 & 2 & 2 & 2 & 2 & 2 & 2 & 2  &  1 \\
\hline
\end{tabular}$$
The corresponding weight enumerators are 
\begin{equation}
\begin{split}
&\mathcal{W}_{(\Z_{12}, d_1)}(t) = t^6 + 2t^5 + 2t^4 + 2t^3 + 2t^2 + 2t +1 ,\\
&\mathcal{W}_{(\Z_{12}, d_2)}(t) = 5t^4 + 2t^3 + 2t^2 + 2t +1 ,\\
&\mathcal{W}_{(\Z_{12}, d_3)}(t) = 7t^3 + 2t^2 + 2t +1 ,\\
&\mathcal{W}_{(\Z_{12}, d_4)}(t) = 9t^2 + 2t +1 .
\end{split}
\end{equation}

Note that the associated metrics $d_i$ obtained are all different and that the metric $d_1$ is just the Lee metric.
\end{ej}

\begin{rem} \label{lee rec}
In general, considering different subgroups $H$ of $G$, the isometric embeddings provided by Theorem \ref{teo emb} give rise to different metrics. In the particular case that $G=\Z_{2n}$, we have $H=\Z_n$, and the isometric embedding 
$$\Z_{2n} \hookrightarrow ((\Z_2)^n, d_{\Ham}^{\, n})$$ 
recovers the Lee metric on $\Z_{2n}$ since the associated weight function $w$ is given by 
$$\big(w(i)  \big)_{i=0}^{2n-1} = (0,1,2,\ldots, n-1, n, n-1,\ldots 2,1).$$  
\end{rem}

\section{Isometries between $\Z_{q^n}$ and $\Z_q^n$}\label{Zqn}
Here we will prove that the groups $\Z_{q^n}$ and $\Z_q^n$ are isometric for positive integers $n$ and $q$ with $q\ge 2$ by using metrics different from the Hamming metric. 
Namely, the $RT$-metric in $\Z_q^n$ and the $q$-adic metric on $\Z_{q^n}$ that we now define.

\begin{defi} \label{def dq}
Let $n,q \in \N$ with $q \ge 2$. The $q$-\textit{adic} metric $d_q$ in $\Z_{q^n}$ is given by
\begin{equation}  \label{qadica}
  d_q(x,y) = \min_{0 \le i \le n} \{i : q^{n-i} \mid x-y\}
\end{equation}
for any $x,y \in \Z_{q^n}$.
\end{defi}

Indeed, $d_q$ is a translation invariant metric. To check that it is a metric it is enough to show the triangle inequality, the other conditions being straightforward.
Let $x,y,z \in \Z_{q^n}$ and suppose that 
$$i=d(x,z), \quad j=d(z,y) \quad \text{ and } \quad k=d(x,y).$$ 
Then, $q^{n-i} \mid x-z$ and $q^{n-j} \mid z-y$, and thus we have that  
$$q^{n-\max\{i,j\}} \mid (x-z)+(z-y)=x-y.$$ 
Hence we have $k \le \max\{i,j\}$ and therefore 
$d(x,y) \le d(x,z) + d(z,y)$. In fact, $d_q$ is an ultrametric.
Finally, we have $d_q(x+z,y+z) = d_q(x,y)$ by definition, hence $d_q$ is translation invariant.
Notice that alternatively we have
$$d_q(x,y)= \lceil \log_q \big( ord(x-y) \big) \rceil,$$
where $ord$ denotes the order of an element in the group.
In particular, if $q=p$ is prime we simply get $d_p(x,y)=\log_p \big( ord(x-y) \big)$.

We recall that the \textit{Rosenbloom--Tsfasman metric} (or $RT$-metric) was originally defined over $\F_q^{n}$ (\cite{RT-metric}), 
hence for $q$ a prime power. However, this metric can be defined over $G^n$ for any group $G$. Thus, we now define the \textit{$RT$-metric} on $\Z_q^{n}$ for any pair of integers $n, q$ with $q \ge 2$ as follows:
\begin{equation} \label{RT metric}
d_{RT}(x,y) = \max_{1\leq i\leq n} \left\{ i \;:\; x_i-y_i \ne 0\right\}.
\end{equation} Note that $d_{RT}$ is translation invariant by definition.
It is known that it coincides with the poset metric $d_P$ on $\Z_q^{n}$
given by the chain poset $P$ defined by $1\preceq 2 \preceq \cdots\preceq n$ (see \cite{PFS}).

Now, we construct an explicit isometry between the groups $\Z_{q^n}$ and $\Z_q^{n}$ with the previous metrics. 
Let $q\ge 2$ and $n$ be positive integers and consider the function
\begin{gather} \label{iso RT q}
\begin{gathered} 
 \varphi : \Z_q^n \rightarrow \Z_{q^n} \\[1mm]
 \varphi(a_1,a_2,\ldots,a_n) \mapsto
a_1q^{n-1}+a_2q^{n-2}+\cdots+a_{n-1}q+a_n \pmod{q^{n}}.
\end{gathered}
\end{gather}
One can check that its inverse 
\begin{equation} \label{iso inv} 
\varphi^{-1} : \Z_{q^n}\rightarrow \Z_q^{n}
\end{equation} 
is given by the $q$-base expansion, namely 
\begin{equation} \label{qadic} 
\begin{array}{ccl}
0 &\mapsto & 0000 \cdots 000 \\
1 &\mapsto & 0000 \cdots 001 \\
\vdots &\qquad    & \phantom{00000} \vdots\\
q-1 &\mapsto & 0000 \cdots 00(q-1)\\
q &\mapsto  & 0000 \cdots 010\\ 
q+1 &\mapsto & 0000 \cdots 011\\
\vdots &\qquad    & \phantom{00000} \vdots\\
q^{2}-1 &\mapsto & 0000 \cdots 0(q-1)(q-1)\\
q^{2} &\mapsto  & 0000 \cdots 100\\ 
q^{2}+1 &\mapsto & 0000 \cdots 101\\
\vdots &\qquad    & \phantom{00000} \vdots  \\
q^{n}-1 &\mapsto & (q-1)(q-1)(q-1)(q-1) \cdots (q-1)(q-1)(q-1)
\end{array}
\end{equation}

We now show that $\varphi$ as in \eqref{iso RT q} preserves distances.
\begin{thm} \label{IsoRT}
For any $n,q \in \N$ with $q\ge 2$ the map $\varphi : (\Z_q^n, d_{RT}) \rightarrow (\Z_{q^n}, d_q)$ as in \eqref{iso RT q} is an isometry.
\end{thm}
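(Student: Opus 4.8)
The plan is to verify directly that $\varphi$ preserves distances, since bijectivity is already clear from the explicit $q$-ary expansion \eqref{qadic} describing $\varphi^{-1}$. Fix $a=(a_1,\ldots,a_n)$ and $b=(b_1,\ldots,b_n)$ in $\Z_q^n$. If $a=b$ then $d_{RT}(a,b)=0=d_q(\varphi(a),\varphi(b))$, so assume $a\neq b$ and set $j=d_{RT}(a,b)=\max\{i: a_i\neq b_i\}$. Since $a_i=b_i$ for every $i>j$, the terms of index larger than $j$ cancel, and from the definition \eqref{iso RT q} of $\varphi$ we get
\begin{equation*}
\varphi(a)-\varphi(b)\;\equiv\;\sum_{i=1}^{j}(a_i-b_i)\,q^{\,n-i}\pmod{q^{\,n}}.
\end{equation*}
The whole argument will consist in reading off the $q$-adic divisibility of the right-hand side.

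First I would prove $d_q(\varphi(a),\varphi(b))\le j$: for each $i\le j$ one has $n-i\ge n-j$, so every summand above is divisible by $q^{\,n-j}$, hence $q^{\,n-j}\mid \varphi(a)-\varphi(b)$ and $j$ belongs to the set defining $d_q$ in \eqref{qadica}. For the reverse inequality $d_q(\varphi(a),\varphi(b))\ge j$, I would isolate the leading term: reducing the displayed congruence modulo $q^{\,n-j+1}$ annihilates every summand with $i<j$ (since then $n-i\ge n-j+1$), leaving
\begin{equation*}
\varphi(a)-\varphi(b)\;\equiv\;(a_j-b_j)\,q^{\,n-j}\pmod{q^{\,n-j+1}}.
\end{equation*}
As $a_j,b_j\in\{0,1,\ldots,q-1\}$ and $a_j\neq b_j$, we have $a_j-b_j\not\equiv 0\pmod q$, so $(a_j-b_j)q^{\,n-j}\not\equiv 0 \pmod{q^{\,n-j+1}}$; therefore $q^{\,n-j+1}\nmid \varphi(a)-\varphi(b)$, and a fortiori no $i<j$ can satisfy $q^{\,n-i}\mid \varphi(a)-\varphi(b)$. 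Combining the two inequalities gives $d_q(\varphi(a),\varphi(b))=j=d_{RT}(a,b)$, which is the assertion. The extreme case $j=n$ needs no separate treatment: $q^{0}=1$ always divides, so $d_q\le n$ automatically, and the modulo-$q^{\,n-j+1}$ argument above (which reads ``modulo $q$'' when $j=n$) still applies verbatim.

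I do not expect a genuine obstacle here; the one point worth flagging is that $\varphi$ fails to be a group homomorphism $\Z_q^n\to\Z_{q^n}$, because componentwise reduction mod $q$ discards carries, so one cannot simply reduce the statement to a computation of weights and must argue with the two vectors $a,b$ directly as above. Once the cancellation of the high-index coordinates and the isolation of the leading differing coordinate are written down, the remaining $q$-adic valuation bookkeeping is immediate.
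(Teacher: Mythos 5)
Your proof is correct and follows essentially the same route as the paper: write $\varphi(a)-\varphi(b)\equiv\sum_{i\le j}(a_i-b_i)q^{n-i}\pmod{q^n}$ after cancelling the coordinates beyond $j=d_{RT}(a,b)$ and read off the $q$-adic divisibility, concluding $d_q(\varphi(a),\varphi(b))=j$. You merely spell out the step the paper leaves implicit (that $q^{n-j+1}\nmid\varphi(a)-\varphi(b)$ because $a_j-b_j\not\equiv 0\pmod q$), which is a welcome but not essentially different elaboration.
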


\begin{proof}
To see that $\varphi$ is an isometry between metric groups we must show that $\varphi$ preserves distances and that 
the involved metrics are translation invariant.

Let $x,y\in \Z_q^{n}$ and suppose that $d_{RT}(x,y)=k$. This means that
$x_k \neq y_k$ and $x_i = y_i$ for $i=k+1,\ldots,n$.
On the other hand, 
$$d_q(\varphi (x),\varphi(y)) = \min_{0 \le i \le n}\{i : q^{n-i} \mid \varphi(x) - \varphi(y) \}$$
where, by \eqref{iso RT q}, we have that
$$\varphi(x)-\varphi(y) = (x_1-y_1) \, q^{n-1} + (x_2-y_2) \, q^{n-2} + \cdots + (x_{k}-y_{k}) \, q^{n-k}
\pmod{q^{n}},$$
with $x_{k}-y_{k}\neq 0$. Thus  
$$d_q(\varphi (x),\varphi(y)) = k = d_{RT}(x,y)$$
and hence $\varphi$ preserves distances. Finally, we have previously observed that both $d_{RT}$ and $d_q$ are translation invariant and the result thus follows. 
\end{proof}

Notice that, by \eqref{qadica}, \eqref{RT metric} and Theorem \ref{IsoRT}, the weight enumerators are 
\begin{equation} \label{enum qadic}
\mathcal{W}_{(\Z_{q^n}, d_q)}(t) = \mathcal{W}_{(\Z_q^n, d_{RT})}(t) = \sum_{i=0}^n (q^i-q^{i-1}) \, t^i 
= (q-1) \sum_{i=0}^n q^{i-1} \, t^i. 
\end{equation}

\begin{exam}
We now illustrate the previous theorem showing that the groups $\Z_2^3$ and $\Z_8$ are isometric. 
We take the $d_{RT}$ metric  on $\Z_2^3$ and the $2$-adic metric $d_2$ on $\Z_8$.
In this case, the map $\varphi^{-1}:\Z_8 \rightarrow \Z_2^{3}$ in \eqref{iso inv} is given by 
\begin{align*}
0 \; & \mapsto (0,0,0), & 2 \; &\mapsto (0,1,0), & 4 \; &\mapsto (1,0,0), & 6 \; &\mapsto (1,1,0), \\
1 \; & \mapsto (0,0,1), & 3 \; &\mapsto (0,1,1), & 5 \; &\mapsto (1,0,1), & 7 \; &\mapsto (1,1,1).
\end{align*}
The graphs of distances of the groups are as follows: 

\begin{center}
\begin{minipage}[t][][t]{0.485\textwidth}
\begin{tikzpicture}[scale=0.91,transform shape,>=stealth',shorten
>=1pt,auto,node distance=4cm, thick,main node/.style={circle,draw,font=\large}]
\tikzstyle{every node}=[node distance = 3.5cm,bend angle    = 45,fill          =
gray!30]

  \node[main node] (0) at ({6*45}:3) {0};
  \node[main node] (1) at ({5*45}:3) {1};
  \node[main node] (2) at ({4*45}:3) {2} ;
  \node[main node] (3) at ({3*45}:3) {3};
  \node[main node] (4) at ({2*45}:3) {4};
  \node[main node] (5) at ({1*45}:3) {5} ;
  \node[main node] (6) at ({0*45}:3) {6};
  \node[main node] (7) at ({7*45}:3) {7};
  \node     at (0,-4.5) {$(\Z_8, d_2)$};

  \path[every node/.style={font=\large,circle}]

    (0) edge node[left,pos=0.64] {1} (4)
       edge[red] node [right, pos=0.64] {2} (2)
       edge[red] node [left, pos=0.51] {2} (6)
       edge[blue] node[below] {3} (1)
       edge[blue] node[left=-0.2cm, pos=0.49] {3} (3)
       edge[blue] node[right=-0.2cm] {3} (5)
       edge[blue] node[below] {3} (7)
    (1) edge node [left,pos=0.40] {1} (5)
        edge[red] node [right=-0.14cm, pos=0.57] {2} (3)
        edge[red] node [above, pos=0.4] {2} (7)
        edge[blue] node[left] {3} (2)
        edge[blue] node[left=-0.14cm, pos=0.516] {3} (4)
        edge[blue] node[right=0.1cm,pos=0.42] {3} (6)
        
    (2) edge node [above,pos=0.64] {1} (6)
      edge[red] node [below, pos=0.64] {2} (4)
      edge[blue] node[left] {3} (3)
      edge[blue] node[above=-0.14cm,pos=0.486] {3} (5)
      edge[blue] node[left,below=-0.14cm,pos=0.486] {3} (7)
        
    (3) edge node [left,pos=0.70] {1} (7)
      edge[red] node [below=-0.14cm, pos=0.56] {2} (5)
      edge[blue] node[above] {3} (4)
      edge[blue] node[above=-0.14cm,pos=0.516] {3} (6)
      
    (4) edge[red] node [below, pos=0.5] {2} (6)
       edge[blue] node[above] {3} (5)
       edge[blue] node[right=-0.14cm,pos=0.486] {3} (7)
    
    (5) edge[red] node [left=-0.14cm, pos=0.56] {2} (7)
    edge[blue] node[right] {3} (6)
    
    (6) edge[blue] node[right] {3} (7);
\end{tikzpicture}

\end{minipage}
\hfill 
\begin{minipage}[t][][t]{0.485\textwidth}
\begin{tikzpicture}[scale=0.85,transform shape,>=stealth',shorten
>=1pt,auto,node distance=4cm, thick,main
node/.style={circle,draw,font=\scriptsize,inner sep=1mm}]
 \tikzstyle{every node}=[node distance = 3.5cm,bend angle    = 45,fill          =
gray!30]

  \node[main node] (0) at ({6*45}:3) {(0,0,0)};
  \node[main node] (1) at ({5*45}:3) {(0.0,1)};
  \node[main node] (2) at ({4*45}:3) {(0,1,0)} ;
  \node[main node] (3) at ({3*45}:3) {(0,1,1)};
  \node[main node] (4) at ({2*45}:3) {(1,0,0)};
  \node[main node] (5) at ({1*45}:3) {(1,0,1)} ;
  \node[main node] (6) at ({0*45}:3) {(1,1,0)};
  \node[main node] (7) at ({7*45}:3) {(1,1,1)};
  \node     at (0,-4.5) {$(\Z_2^3, d_{RT})$};

  \path[every node/.style={font=\large,circle}]
  
  (0) edge node[left,pos=0.64] {1} (4)
       edge[red] node [right, pos=0.64] {2} (2)
       edge[red] node [left, pos=0.51] {2} (6)
       edge[blue] node[below] {3} (1)
       edge[blue] node[left=-0.2cm, pos=0.49] {3} (3)
       edge[blue] node[right=-0.2cm] {3} (5)
       edge[blue] node[below] {3} (7)
    (1) edge node [left,pos=0.40] {1} (5)
        edge[red] node [right=-0.14cm, pos=0.57] {2} (3)
        edge[red] node [above, pos=0.4] {2} (7)
        edge[blue] node[left] {3} (2)
        edge[blue] node[left=-0.14cm, pos=0.516] {3} (4)
        edge[blue] node[right=0.1cm,pos=0.42] {3} (6)
        
    (2) edge node [above,pos=0.64] {1} (6)
      edge[red] node [below, pos=0.64] {2} (4)
      edge[blue] node[left] {3} (3)
      edge[blue] node[above=-0.14cm,pos=0.486] {3} (5)
      edge[blue] node[left,below=-0.14cm,pos=0.486] {3} (7)
        
    (3) edge node [left,pos=0.70] {1} (7)
      edge[red] node [below=-0.14cm, pos=0.56] {2} (5)
      edge[blue] node[above] {3} (4)
      edge[blue] node[above=-0.14cm,pos=0.516] {3} (6)
      
    (4) edge[red] node [below, pos=0.5] {2} (6)
       edge[blue] node[above] {3} (5)
       edge[blue] node[right=-0.14cm,pos=0.486] {3} (7)
    
    (5) edge[red] node [left=-0.14cm, pos=0.56] {2} (7)
    edge[blue] node[right] {3} (6)
    
    (6) edge[blue] node[right] {3} (7);
\end{tikzpicture}
\end{minipage}
\end{center}
One can easily check that the map $\varphi$ preserves distances.

Also, note that the associated weight functions $w_{RT} : \Z_2^3 \rightarrow [\![0,3]\!]$ and $w_2 : \Z_8 \rightarrow [\![0,3]\!]$
are given by
\begin{equation*}
    w_{RT}(x) =
    \begin{cases}
      0    & \quad \text{if } x=(0,0,0),\\
      1    & \quad \text{if } x=(1,0,0),\\
      2    & \quad \text{if } x=(a,1,0), \\ 
      3    & \quad \text{if } x=(a,b,1),   
    \end{cases} \qquad \text{and} \qquad 
		 w_2(x) =
    \begin{cases}
      0 & \quad \text{if } x=0,\\
      1 & \quad \text{if } x=4,\\
      2 & \quad \text{if } x=2,6,\\
      3 & \quad \text{if } x=1,3,5,7,
   \end{cases}
\end{equation*}
with $a,b \in \Z_2$. The weight enumerators are thus 
$$\mathcal{W}_{(\Z_2^3,d_{RT})}(t) = \mathcal{W}_{(\Z_8,d_2)}(t) = 4t^3+2t^2+t+1.$$
\end{exam}

\section{Extending isometries of subgroups}
In this section we show that any isometry between metric subgroups can be extended to an isometry between the ambient groups with extended metrics. We recall from Remark \ref{rem inv} that all the metrics considered are $G$-invariant. 
More precisely, we have the following

\begin{thm} \label{Isonotrivial}
Let $G_1$ and $G_2$ be two finite groups with $|G_1|=|G_2|$ and let $H_1 \subsetneq G_1$, $H_2 \subsetneq G_2$ be non-trivial proper subgroups with $|H_1|=|H_2|$. 
Then, any isometry between $H_1$ and $H_2$ can be extended to an isometry between $G_1$ and $G_2$.
\end{thm}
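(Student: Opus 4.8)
The plan is to extend both metrics ``flatly across cosets,'' so that the extension is transparently group invariant. Write $m=[G_1:H_1]$; since $|G_1|=|G_2|$ and $|H_1|=|H_2|$ we also have $m=[G_2:H_2]$. Let $\psi:(H_1,d_1)\to(H_2,d_2)$ be the given isometry, and recall from Remark~\ref{rem inv} that each $d_i$ is right $H_i$-invariant, so $d_i(h,h')=w_i(h(h')^{-1})$ with $w_i(h)=d_i(h,e)$ and $w_i(h^{-1})=w_i(h)$. I would choose right transversals $\{t_1=e,t_2,\dots,t_m\}$ of $H_1$ in $G_1$ and $\{s_1=e,s_2,\dots,s_m\}$ of $H_2$ in $G_2$, and define $\Psi:G_1\to G_2$ by $\Psi(ht_j)=\psi(h)s_j$. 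Since every element of $G_1$ is uniquely of the form $ht_j$, this $\Psi$ is a bijection; it restricts to $\psi$ on $H_1$ (the case $j=1$) and carries the right coset $H_1t_j$ onto $H_2s_j$.

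Next I would extend the weights. Fix an integer $N\ge\operatorname{diam}(H_1,d_1)$; note that $\operatorname{diam}(H_1,d_1)=\operatorname{diam}(H_2,d_2)$ since $\psi$ is an isometry, and because $d_1$ is integral one may simply take $N=|H_1|$, which moreover satisfies $N<|G_1|$ as $H_1\subsetneq G_1$. Define $W_i:G_i\to\N_0$ by $W_i(g)=w_i(g)$ for $g\in H_i$ and $W_i(g)=N$ for $g\notin H_i$, and set $d_{G_i}(x,y)=W_i(xy^{-1})$. By construction $d_{G_i}$ is right $G_i$-invariant and integral, and it restricts to $d_i$ on $H_i$, since any two elements of $H_i$ lie in the common coset $H_i\cdot e$. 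Then I would check that $d_{G_i}$ is a metric: positivity is clear, symmetry follows from $W_i(g)=W_i(g^{-1})$ (true inside $H_i$ by the above, and outside since $g\notin H_i$ iff $g^{-1}\notin H_i$), and the triangle inequality reduces to a short case analysis on how many of $x,y,z$ share a coset of $H_i$; the only case not immediate, namely $x,z$ in one coset and $y$ in another, gives $d_{G_i}(x,z)\le\operatorname{diam}(H_i,d_i)\le N\le 2N=d_{G_i}(x,y)+d_{G_i}(y,z)$.

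Finally I would verify that $\Psi$ is an isometry. For $x=ht_j$ and $y=h't_{j'}$ one computes $\Psi(x)\Psi(y)^{-1}=\psi(h)\,s_js_{j'}^{-1}\,\psi(h')^{-1}$, which lies in $H_2$ exactly when $s_js_{j'}^{-1}\in H_2$, i.e.\ exactly when $j=j'$; and this is precisely when $xy^{-1}=ht_jt_{j'}^{-1}(h')^{-1}\in H_1$. If $j\ne j'$ then both $d_{G_1}(x,y)$ and $d_{G_2}(\Psi(x),\Psi(y))$ equal $N$. If $j=j'$ then $d_{G_1}(x,y)=w_1(h(h')^{-1})=d_1(h,h')$, while $d_{G_2}(\Psi(x),\Psi(y))=w_2(\psi(h)\psi(h')^{-1})=d_2(\psi(h),\psi(h'))=d_1(h,h')$ because $\psi$ is an isometry. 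Hence $d_{G_1}$ is the pullback of $d_{G_2}$ along $\Psi$, so $\Psi:(G_1,d_{G_1})\to(G_2,d_{G_2})$ is an isometry extending $\psi$.

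The genuine content is all in the middle step: the extension of the metric off the subgroup must be at once $G_i$-invariant, a true metric, and compatible with $\Psi$. Packaging it as a weight function on the whole group that is constant ($=N$) off the subgroup makes invariance automatic, reduces the metric axioms to a one-line check, and forces compatibility with $\Psi$ once one observes that $\Psi$ permutes right cosets and coincides with $\psi$ on the base coset. Note that we never use that $\psi$ is a homomorphism, and that the hypothesis ``$H_i$ nontrivial'' is not actually needed --- dropping it only yields the weaker (and already known) statement that any two groups of the same order are isometric via the discrete metric.
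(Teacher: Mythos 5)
Your proposal is correct and follows essentially the same route as the paper: extend each metric by a single constant value off the subgroup and map right cosets to right cosets via transversals, with distance preservation checked coset by coset. Your packaging via a weight $W_i$ constant off $H_i$ (with $N\ge\operatorname{diam}$ instead of the paper's $\max d_i+1$) just makes the translation invariance of the extended metric explicit; it is the same construction.
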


\begin{proof}
Suppose that $(H_1,d_1) \simeq (H_2,d_2)$ and let $\tau:H_1 \rightarrow H_2$ be the isometry. Now, for $i=1,2$, we extend the metrics $d_i$ of $H_i$ to metrics $\tilde d_i$ of $G_i$ as follows:
\begin{equation} \label{d tilde}
\tilde d_i (x,y) :=
    \begin{cases}
      d_i(x,y)   & \qquad \text{if } x-y \in H_i, \\[2mm] 
			\max\limits_{u,v \in H_i} \{ d_i(u,v) \} +1  & \qquad \text{if } x-y \not\in H_i.
    \end{cases}
\end{equation}
Clearly $\tilde d_i(x,y)=0$ if and only if $x=y$ and $\tilde d_i(x,y) = \tilde d_i(y,x)$. We must check that $\tilde d_i$ satisfies the triangular inequality. Let $x,y,z \in G_i$. If $x-y, x-z, z-y \in H_i$ it follows from the triangular inequality from $d_i$. Now, if one of
 $x-z$ or $z-y$ is not in $H_i$, say $x-z$, then 
$$\tilde d_i(x,z) = \max_{u,v \in H_i}\{d_i(u,v)\}+1$$ 
and we have $\tilde d_i(x,z) = \tilde d_i(x,y)$ if $x-y \notin H_i$ or $\tilde d_i(x,z) \ge \tilde d_i(x,y)$ if $x-y \in H_i$, 
and the claim follows.

Now, suppose that $m=|G_1|=|G_2|$ and $h=|H_1|=|H_2|$. 
Let $T_i$ be a complete set of representatives of the right cosets of $H_i$ in $G_i$ for $i=1,2$. 
Consider any bijection $\rho : T_1 \rightarrow T_2$ and define the map 
\begin{align} \label{bij coset}
\begin{aligned}
\qquad G_1 \;\; & \overset{\eta} {\longrightarrow} \;\; G_2 \\
h + g_j \; &\longmapsto \; \tau (h) + \rho(g_j) ,
\end{aligned}
\end{align}
where $g_1, g_2, \ldots, g_{\frac mh}$ are the elements of $T_1$. It is clear that $\eta$ is bijective.

Note that $x, y$ belong to the same coset of $H_1$ if and only if $\eta (x), \eta (y)$ belong to the same coset of $H_2$. 
Therefore we conclude that 
$$ \tilde d_1(x,y) = d_1(x,y) = d_2(\eta (x),\eta(y)) = \tilde d_2(\eta (x),\eta(y))$$
and hence $(G_1, \tilde d_1) \simeq (G_2, \tilde d_2)$, as it was to be shown.
\end{proof}

\begin{rem}
In the previous proof, the isometry $\eta$ 
given by \eqref{bij coset} is not unique, since $\eta=\eta_{\rho}$ depends on the bijection $\rho$ between the complete set of representatives of right cosets $T_1$ on $G_1$ and $T_2$ on $G_2$ chosen. 
However, two such metrics differ by a distance preserving map. That is, if $\rho$ and $\rho'$ are two bijections from $T_1$ to $T_2$ then there is some $f\in \Gamma(G,\tilde d_2)$ such that $\eta_{\rho} = f \circ \eta_{\rho'}$. In fact, 
if $f= \eta_\rho \circ \eta_{\rho'}^{-1}$ then  
$$\tilde d_2(f(x),f(y)) = \tilde d_2 \big( \eta_\rho(\eta_{\rho'}^{-1}(x)), \eta_\rho(\eta_{\rho'}^{-1}(y)) \big) =
\tilde d_1(\eta_{\rho'}^{-1}(x), \eta_{\rho'}^{-1}(y)) = \tilde d_2(x,y)$$
and hence $f$ preserves distances.  
\end{rem}

A direct consequence of this result is that every pair of groups of the same size are isometric.
For groups of prime cardinality, the isometry is trivial in the sense that both metrics are Hamming metrics.

\begin{coro} \label{ext metric}
Let $G_1$ and $G_2$ be groups of the same cardinality.  
Then, there exists an isometry $\phi : (G_1, d_1) \rightarrow (G_2, d_2)$ where 
$d_1$ and $d_2$ are certain metrics in $G_1$ and $G_2$. Moreover, if $|G_i|$ is not prime then $d_i$ can be chosen in such a way that $d_i \ne d_{_{\Ham}}$ for $i=1,2$.
\end{coro}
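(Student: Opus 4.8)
The plan is to derive Corollary~\ref{ext metric} directly from Theorem~\ref{Isonotrivial}. Let $G_1,G_2$ be groups with $|G_1|=|G_2|=m$. First I would dispose of the prime case: if $m=p$ is prime, then $G_1\simeq G_2\simeq \Z_p$ and we simply take $d_1=d_2=d_{\Ham}$ (the Hamming metric on a $p$-element set), which is trivially preserved by any bijection. This is the ``trivial'' isometry mentioned in the statement.

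Now assume $m$ is not prime. Then each $G_i$ has a proper nontrivial subgroup $H_i$: indeed, pick any element $x_i\in G_i$ of prime order $r$ dividing $m$ (such an element exists by Cauchy's theorem); if $r<m$, set $H_i=\langle x_i\rangle$, and if $r=m$ (so $G_i$ is cyclic of order $m$ with $m$ composite), pick a prime divisor $s$ of $m$ with $1<s<m$ and let $H_i$ be the unique subgroup of order $s$. Either way $1\subsetneq H_i\subsetneq G_i$. The two subgroups need not have the same order a priori, but we are free to choose them: fix a prime $\ell$ dividing $m$ with $\ell<m$ and let $H_i\le G_i$ be a subgroup of order $\ell$ (Cauchy again), so that $|H_1|=|H_2|=\ell$. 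Equip each $H_i$ with \emph{some} nontrivial translation-invariant metric $d_i^{H}$ for which $d_i^H\ne d_{\Ham}$ on $H_i$ --- since $\ell\ge 2$ and $H_i\simeq\Z_\ell$, we may take, e.g., the $\ell$-adic metric of Definition~\ref{def dq} with $n=1$ rescaled, or more simply any weight function with $\mathrm{Im}(w)\supsetneq\{0,1\}$ when $\ell\ge 3$; for $\ell=2$ the Hamming metric is the only option on $H_i$, but this is harmless because the extension below will still produce a non-Hamming metric on $G_i$ (see below). By Proposition~\ref{transfer} (or transparently, since $H_1\simeq H_2$) there is an isometry $\tau:(H_1,d_1^H)\to(H_2,d_2^H)$.

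Next I would invoke Theorem~\ref{Isonotrivial}: since $|G_1|=|G_2|$, $|H_1|=|H_2|$, and $H_i$ is a proper nontrivial subgroup of $G_i$, the isometry $\tau$ extends to an isometry $\phi:(G_1,\tilde d_1)\to(G_2,\tilde d_2)$, where $\tilde d_i$ is the extended metric~\eqref{d tilde}. Set $d_i:=\tilde d_i$. This proves the existence part. For the ``moreover'' part, it remains to check $d_i\ne d_{\Ham}$ when $m$ is not prime. By construction~\eqref{d tilde}, $\tilde d_i$ takes the value $\max_{u,v\in H_i}\{d_i^H(u,v)\}+1$ on pairs $x,y$ with $x-y\notin H_i$. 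Since $H_i\ne\{e\}$, we have $\max_{u,v\in H_i}\{d_i^H(u,v)\}\ge 1$, so that value is $\ge 2$; hence $\tilde d_i$ attains a value $\ge 2$ and is not the Hamming metric on $G_i$. (Even in the exceptional case $\ell=2$ where $d_i^H=d_{\Ham}$ on $H_i$, we get $\max=1$ and the off-coset distance equals $2\ne1$, so $\tilde d_i\ne d_{\Ham}$.) This completes the argument.

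The only real subtlety --- the ``main obstacle'' --- is ensuring the hypotheses of Theorem~\ref{Isonotrivial} can be met, namely that both groups contain proper nontrivial subgroups \emph{of the same order}; this is exactly why the hypothesis ``$|G_i|$ not prime'' is needed, and it is handled by Cauchy's theorem as above. Everything else is bookkeeping: the verification that $\tilde d_i\ne d_{\Ham}$ reduces to the trivial remark that a proper nontrivial subgroup forces the extension constant to be at least $2$.
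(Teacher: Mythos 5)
Your proposal is correct and follows essentially the same route as the paper: take subgroups of the same prime order $p\mid m$ in $G_1$ and $G_2$ (Cauchy), note they are trivially isometric, lift via Theorem \ref{Isonotrivial}, and observe that the extended metric takes the value $2$ off the subgroup, hence is not Hamming. The only difference is that the paper simply puts the Hamming metric on both subgroups from the start, so your extra effort to choose a non-Hamming metric on $H_i$ is unnecessary (as you yourself note, the extension is non-Hamming regardless).
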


\begin{proof}
Suppose $m=|G_1|=|G_2|$. If $m$ is not prime, consider a prime $p$ dividing $m$. Then, there are non-trivial proper subgroups $H_1<G_1$ and 
$H_2<G_1$ with $p=|H_1|=|H_2|$. 
Considering the Hamming metric in both 
$H_1$ and $H_2$ it is clear that these subgroups are isometric, that is $(H_1,d_{\Ham}) \simeq (H_2,d_{\Ham})$. 
From Theorem \ref{Isonotrivial}, this isometry lifts to an isometry 
$$(G_1, d_1) \simeq (G_2, d_2),$$ 
where the metric $d_i = \tilde d_{\Ham}$ for $i=1,2$ and $\tilde d$ is as in \eqref{d tilde}.  
That is, $d_i(x,x)=0$, 
\begin{equation} \label{dGH}
d_i(x,x+h)=1 \quad \text{if $h\in H\smallsetminus \{0\}$} \qquad \text{and} \qquad
d_i(x,x+g)=2  \quad \text{if $g\in G\smallsetminus H$,}
\end{equation}
for any $x\in G_i$ and $i=1,2$. 

If $m=p$, then $G_1\simeq G_2 \simeq \Z_p$ and they are trivially isometric with the Hamming metrics. This completes the proof.
\end{proof}

\begin{rem}
By Corollary \ref{ext metric}, for any $m,n \geq 2$ there exist, for instance, non-trivial isometries  
$\Z_{m^n} \simeq (\Z_m)^n$, $\F_{q^n} \simeq (\F_q)^n$, $\D_{m} \simeq \Z_{2m}$, etc.
\end{rem}

Let $H \subsetneq G$ be a non-trivial proper subgroup and $d$ a metric in $H$. In the sequel we will denote by
\begin{equation} \label{dext} 
\tilde d= Ext_H^G(d)
\end{equation}
(or simply $Ext_H(d)$ when $G$ is understood) 
the metric in $G$ induced by the extension given in Theorem \ref{Isonotrivial} (see \eqref{d tilde}). We will call this the \textit{extended metri}c of $d$ from $H$ to $G$.

We now illustrate the previous theorem for groups of small order $n=4, 6, 8$.
\begin{exam}[$n=4$]
Let $G_1= \Z_4$ and $G_2= \Z_2^2$. It is known that $(\Z_4, d_{_{Lee}})$ is isometric to $(\Z_2^2, d_{\Ham}^2)$ 
via the Gray map. We now show that they are isometric by using 
Theorem~\ref{Isonotrivial}. 

Consider the subgroups 
$H_1=\Z_2 \subsetneq \Z_4$ and $H_2 = \Z_2 \times \{0\} \subsetneq \Z_2 \times \Z_2$, both with the Hamming metric $d_{\Ham}$. These subgroups are isometric via the inclusion map 
$\iota: \Z_2 \rightarrow \Z_2 \times \{0\}$ given by $x \mapsto (x,0)$. 
By Theorem \ref{Isonotrivial} and \eqref{dext} we have that 
$$\big( \Z_4, \tilde d_1 = Ext_{\Z_2}^{\Z_4}(d_{\Ham}) \big) \simeq 
		\big( \Z_2^2, \tilde d_2 = Ext_{\Z_2 \times \{0\} }^{\Z_2 \times \Z_2}(d_{\Ham}) \big).$$ 

Notice that $Ext_{\Z_2}^{\Z_4}(d_{\Ham}) = d_2$, the $2$-adic metric, and 
$Ext_{\Z_2 \times \{0\} }^{\Z_2 \times \Z_2}(d_{\Ham}) = d_{RT}$, the $RT$-metric.  
In fact, by \eqref{qadica} and \eqref{RT metric} we have 
$$d_2(x,y) = \min\limits_{0\le i \le 2} \{i : 2^{2-i} \mid x-y\} \qquad \text{and} \qquad 
d_{RT}(x,y) = \max\limits_{1\le i \le 2} \{i : x_i-y_i \ne 0\}$$
respectively. Hence, by \eqref{d tilde} or \eqref{dGH}, 
 for any $x, y \in \Z_4$ we have 
\begin{align*}
\tilde d_1 (x,y) = 1 = d_2(x,y)  & \qquad \text{if } x-y=2, \\
\tilde d_1 (x,y) = 2 = d_2(x,y)  & \qquad \text{if } x-y=1,3, 
\end{align*}
while for any $u, v \in \Z_2^2$ we get
 \begin{align*}
\tilde d_2 (u,v) = 1 = d_{RT}(u,v)  & \qquad \text{if } u-v=(1,0), \\
\tilde d_2 (u,v) = 2 = d_{RT}(u,v)  & \qquad \text{if } u-v=(0,1), (1,1).
\end{align*}

We wish to point out that, although $d_2$ and $\tilde d_{RT}$ are different from $d_{_{Lee}}$ and $d_{\Ham}$, these metrics are correspondingly equivalent, i.e.\@ $d_2 \simeq d_{_{Lee}}$ and $d_{RT} \simeq d_{\Ham}$, in a precise sense that we will not discuss here (this will be treated in another work). 
\end{exam}

\begin{exam}[$n=6$]
We now consider the groups $\Z_6$ and $\D_3$. By Theorem \ref{Isonotrivial}, we have two different isometries between them, taking as $H_1, H_2$ subgroups of order $2$ or $3$, respectively. 
Namely,
$$(\Z_6, Ext_{\Z_2}(d)) \simeq (\Sym_3, Ext_{\langle \rho \rangle}(d)) \qquad \text{and} \qquad  
(\Z_6, Ext_{\Z_3}(d)) \simeq (\Sym_3, Ext_{\langle \tau \rangle}(d)),$$
where $\rho$ is a $2$-cycle, $\tau$ a $3$-cycle and $d$ is the Hamming metric in $\Z_2$ and $\Z_3$, respectively.

Apart from the Hamming and Lee metrics, in addition we have the metrics obtained by the previous subgroup construction.
The corresponding weight functions and enumerators are given by

\renewcommand{\baselinestretch}{1.6} 
$$\begin{tabular}{|c|cccccc|c|}
\hline
$\Z_6$     & $0$ & $1$ & $2$ & $3$ & $4$ & $5$    & enumerator \\  \hline 
$w_{\Ham}$  & $0$ & $1$ & $1$ & $1$ & $1$ & $1$ & $5t+1$ \\
$w_{\Z_2}$ & $0$ & $2$ & $2$ & $1$ & $2$ & $2$    & $4t^2+t+1$ \\
$w_{\Z_3}$ & $0$ & $2$ & $1$ & $2$ & $1$ & $2$    & $3t^2+2t+1$ \\
$w_{Lee}$  & $0$ & $1$ & $2$ & $3$ & $2$ & $1$    & $t^3+2t^2+2t+1$ \\ \hline 
\end{tabular}$$
and  
$$\begin{tabular}{|c|cccccc|c|} \hline 
$\Sym_3$   & $id$ & $(12)$ & $(13)$ & $(23)$ & $(123)$ & $(132)$ & enumerator \\ \hline
$w_{\Ham}$  & $0$ & $1$ & $1$ & $1$ & $1$ & $1$ & $5t+1$ \\
$w_{\langle (12) \rangle}$ & $0$ & $1$ & $2$ & $2$ & $2$ & $2$ & $4t^2+t+1$ \\
$w_{\langle \tau \rangle}$ & $0$ & $2$ & $2$ & $2$ & $1$ & $1$ & $3t^2+2t+1$ \\ \hline
\end{tabular}
$$
where $\tau$ is any $3$-cycle.
\end{exam}

\begin{exam}[$n=8$]
By Corollary \ref{ext metric}, all the groups of the same size are isometric to each other. Thus, for instance, we have 
$$\Z_2^3 \simeq \Z_2 \times \Z_4 \simeq \Z_8 \simeq \D_4 \simeq \Q_8.$$ 
In fact, note that all these groups have at least one isomorphic copy of $\Z_2$ as a subgroup. 
Thus, if we take any pair $G_1, G_2 \in \{\Z_2^3, \Z_2 \times \Z_4, \Z_8, \D_4,\Q_8\}$, with the trivial identifications, we then have 
$$ (G_1, Ext_{\Z_2}^{G_1}(d_{\Ham})) \simeq (G_2, Ext_{\Z_2}^{G_2}(d_{\Ham})).$$
The associated weights $w_{\Z_2}$ are given by 
\renewcommand{\arraystretch}{1.2}
$$\begin{tabular}{|c|cccccccc|}
\hline 
$\Z_8$     & $0$ & $1$ & $2$ & $3$ & $4$ & $5$ & $6$ & $7$ 
\\  \hline 
$w$        & $0$ & $2$ & $2$ & $2$ & $1$ & $2$ & $2$ & $2$ 
\\   \hline 
\hline
$\Z_2^{3}$     & $(0,0,0)$ & $(1,0,0)$ & $(0,1,0)$ & $(0,0,1)$ & $(1,1,0)$ & $(0,1,1)$ & $(1,0,1)$ & $(1,1,1)$  
\\  \hline 
$w$        & $0$ & $1$ & $2$ & $2$ & $2$ & $2$ & $2$ & $2$ 
\\  \hline 
\hline
$\Z_2\times\Z_4$     & $(0,0)$ & $(1,0)$ & $(1,1)$ & $(1,2)$ & $(1,3)$  & $(0,1)$ & $(0,2)$ & $(0,3)$  
\\  \hline 
$w$        & $0$ & $1$ & $2$ & $2$ & $2$ & $2$ & $2$ & $2$ 
\\  \hline 
\hline
$\D_4$     & $e$ & $\rho$ & $\rho^{2}$ & $\rho^{3}$ & $\tau$ & $\rho\tau$ & $\rho^{2}\tau$ & $\rho^{2}\tau$ 
\\  \hline 
$w$        & $0$ & $2$ & $2$ & $2$ & $1$ & $2$ & $2$ & $2$ 
\\  \hline 
\hline
$\Q_8$     & $1$ & $-1$ & $i$ & $-i$ & $j$ & $-j$ & $k$ & $-k$  
\\  \hline 
$w$        & $0$ & $1$ & $2$ & $2$ & $2$ & $2$ & $2$ & $2$  
\\  \hline 
\end{tabular}$$
The weight enumerator is $\mathcal{W}_{(G,w_{\Z_2})}(t) = 6t^2+t+1$, where $G$ is any group of order 8. 

We now compute the weight enumerators for all the subgroups of all the groups of order 8. Since isomorphic subgroups give the same metric, we consider subgroups up to isomorphism.
It is clear that $\Z_4$ is a subgroup of $G_1 \in \{\Z_8, \Z_2 \times \Z_4, \D_4, \Q_8\}$, that $\Z_2 \times \Z_2$ is a subgroup of $G_2 \in \{\Z_2^3, \Z_2 \times \Z_4, \D_4\}$ and that $\Z_2$ is a subgroup of $G_3$, any group of order 8. 

Thus, the weight enumerators for the corresponding extended metrics are as follows
\begin{equation*}
\begin{aligned}
\mathcal{W}_{(G_1,Ext_{\Z_4}(d_{_{Ham}}))} (t)     & = 4t^2+3t+1, \\  
\mathcal{W}_{(G_1,Ext_{\Z_4}(d_{_{Lee}}))} (t)     & = 4t^3+t^2+2t+1, \\  
\mathcal{W}_{(G_2,Ext_{\Z_2^2}(d_{_{Ham}}))} (t)   & = 4t^2+3t+1, \\  
\mathcal{W}_{(G_2,Ext_{\Z_2^2}(d_{_{Ham}}^2))} (t) & = 4t^3+t^2+2t+1, \\  
\mathcal{W}_{(G_3,Ext_{\Z_2}(d_{_{Ham}}))} (t)     & = 6t^2+t+1.   
\end{aligned}
\end{equation*}

\end{exam}

\section{Chain metrics and chain isometries}
In this section we will consider chain metrics and chain isometries on groups with chains of subgroups, generalizing the construction and results of the previous section.

\begin{defi} \label{defi chm}
Let $G$ be a group and $\C$ a chain of subgroups of $G$, 
\begin{equation} \label{chain} 
\langle 0 \rangle = H_0 \subsetneq H_1 \subsetneq \cdots\subsetneq H_n = G.
\end{equation}
The \textit{chain metric} on $G$ associated to $\mathcal{C}$ is defined by 
\begin{equation} \label{chain metric} 
d_\C(x,y) = i \qquad \text{if } x-y \in H_i \smallsetminus H_{i-1}
\end{equation}
for $i=0,\ldots, n$. Here 
we use the convention $H_{-1}=\varnothing$.
\end{defi}

We now check that $d_\C$ is indeed a metric. We only have to show that the triangular inequality holds. 
Let $x,y,z\in G$ and suppose that $d(x,y)=i$, $d(x,z)=j$ and $d(z,y)=k$. Thus,
$x-y \in H_{i} \smallsetminus H_{i-1}$, $x-z\in H_{j} \smallsetminus H_{j-1}$ and $z-y \in H_{k} \smallsetminus H_{k-1}$.
We can assume that $k\geq j$, therefore 
$$ x-y = (x-z)-(y-z) \in H_{k} \smallsetminus H_{k-1}.$$
This implies that $d(x,y) \le k \le d(x,z) + d(z,y)$, as we wanted to see. 

As a  direct consequence of Definition \ref{defi chm}, the weight enumerator of $G$ with the chain metric is given by 
\begin{equation} \label{enum chain}
\mathcal{W}_{(G, d_\C)} (x) = \sum_{i=0}^n (|H_i|-|H_{i-1}|) \, x^i .
\end{equation}

\begin{rem}
It is worth noting that the $q$-adic metric in $\Z_{q^n}$ and the $RT$-metric in $\Z_q^n$ are chain metrics.

\noindent ($i$) 
Let $G=\Z_{q^n}$ and consider the chain of subgroups $\C$ given by
$$ \langle 0 \rangle \subsetneq \Z_{q} \subsetneq \Z_{q^2} \subsetneq \cdots \subsetneq \Z_{q^{n}},$$ 
where we are identifying $\Z_{q^i}$ with $\inner{q^{n-i}} = q^{n-i} \Z_{q^n}$ for $i=0,\ldots,n$. 
In fact, since for $x,y \in G$ we have  
$$ x-y \in \langle q^{n-i} \rangle \smallsetminus \langle q^{n-(i-1)} \rangle \quad \Leftrightarrow \quad 
q^{n-i} \mid x-y \:\: \text{ and } \:\: q^{n-(i-1)} \nmid x-y$$
then 
$$d_\C(x,y) = \min\limits_{0\le i \le n} \{i: q^{n-i} \mid x-y \} = d_q(x,y)$$ 
holds for any $x,y\in G$, by \eqref{qadica} and \eqref{chain metric}.

\noindent ($ii$) 
Let $G=\Z_q^n$ and consider the following chain of subgroups $\C$,
$$ \langle 0 \rangle \subsetneq \Z_q  \subsetneq \Z_q^2  \subsetneq \cdots \subsetneq \Z_q^n,$$ 
where by abuse of notation $\Z_q^i$ denotes $\Z_q^i \times \{0\}^{n-i}$ for $i=1,\ldots,n$.
In fact, since for $x,y \in G$ we have  
\begin{eqnarray*}
x-y \in \Z_q^i \smallsetminus \Z_q^{i-1} & \quad \Leftrightarrow \quad &
x_i-y_i \ne 0 \quad \text{ and } \quad x_i-y_i \in \Z_q^{i} \\[1mm]
& \quad \Leftrightarrow \quad &  x_i-y_i \ne 0 \quad \text{ and } \quad  x_j-y_j =0 \quad \text{ for } j > i.
\end{eqnarray*}
Then, 
$$d_\C(x,y) = \max\limits_{1\le i \le n} \{i: x_i-y_i \ne 0 \} = d_{RT}(x,y)$$ 
holds for any $x,y\in G$, by \eqref{RT metric} 
and \eqref{chain metric}.

Notice that the weight enumerators given in \eqref{enum qadic} are of the form \eqref{enum chain}.
\end{rem}

We now exhibit another chain metric. 
Let $G$ be a finite group and $r,n$ positive integers. Consider the following chain of groups 
\begin{equation} \label{cadena Gr's} 
\C \,: \quad G \subset G^r \subset G^{r^2} \subset G^{r^3} \subset  \cdots \subset G^{r^n},
\end{equation}
where the inclusions are given by the diagonal maps $\delta_i$. For instance, $\delta_0 :G \rightarrow G^r$ is given by 
$x \mapsto (x,x,\ldots,x)$ with $x$ repeated $r$-times, 
$\delta_1 :G^r \rightarrow G^{r^2}$ is given by 
$$(x,x,\ldots,x) \mapsto ((x,x,\ldots,x),(x,x,\ldots,x),\ldots,(x,x,\ldots,x)),$$ 
and so on.
The chain metric $d_\C$ associated to $\C$ is given as follows.
If $x=(x_1,...,x_{r^n}) \in G^{r^n}$ the weight function associated to $\C$ is given by 
\begin{equation} \label{peso diag}
w_\C (x) = \min_{0\le i \le n} \{ i : x_j=x_j \: \text{ if } \: j\equiv k \!\!\! \pmod{r^{i-1}} \}.
\end{equation}
Let $m=r^n$. The group $\Sym_m$ acts on $G^m$ by permutation of coordinates. If $\sigma =(12\cdots m) \in \Sym_m$, one can check that this is equivalent to  
\begin{equation} \label{peso diag2}
w_\C (x) = \min_{1 \le i \le n} \{ i : \sigma^{r^{i-1}}(x) = x\}
\end{equation}
for $x\ne 0$ and $w_\C(x)=0$ if $x=(0,0,\ldots,0)$.
The chain metric is given by $d_\C (x,y) = w_\C(x-y)$.
We call this the\textit{ diagonal chain metric} of $G$, and we denote it by $d_{\Delta}$.

\begin{exam} \label{diagonal}
Take $G=\Z_2$, $r=2$ and $n=3$ in \eqref{cadena Gr's}. Then we have  
$$ \langle 0 \rangle \subset \Z_2 \subset \Z_2^2 \subset \Z_2^4 \subset \Z_2^8.$$ 
The possible weights in $\Z_2^8$ are $0,1,2,3,4$ given by
$$w_\C(x) = \begin{cases} 
0 & \qquad \text{if } x=(0,0,0,0,0,0,0,0), \\[1mm] 
1 & \qquad \text{if } x=(1,1,1,1,1,1,1,1), \\[1mm] 
2 & \qquad \text{if } x=(x_1,x_2,x_1,x_2,x_1,x_2,x_1,x_2) \text{ with } x_1 \ne x_2, \\[1mm] 
3 & \qquad \text{if } x=(x_1,x_2,x_3,x_4,x_1,x_2,x_3,x_4) \text{ with } x_1 \ne x_3 \text{ or } x_2 \ne x_4,\\[1mm] 
4 & \qquad \text{otherwise}.
\end{cases}$$ 
This is in coincidence with expressions \eqref{peso diag} and \eqref{peso diag2}.
It is clear that the corresponding weight enumerator is given by
$$\mathcal{W}_{(\Z_2^8, d_{\Delta})}(t) = 240 t^4 + 12 t^3 + 2 t^2 + t +1.$$
Compare with the weight enumerator
$$\mathcal{W}_{(\Z_2^8, d_{RT})}(x) = 128 t^8 + 64 t^7 + 32 t^6 + 16 t^5 + 8 t ^4 + 4t^3 + 2t^2 +t+1$$
of $\Z_2^8$ with the $RT$-metric.
\hfill $\lozenge$
\end{exam}

Let $\mathcal{C}$ denote a chain of subgroups as in \eqref{chain}
and let $d$ be a metric in $H_1$. The metric in $G$ obtained by repeated extensions is 
\begin{equation} \label{extension}
\widetilde{d} = Ext_\mathcal{C}(d) = Ext_{H_{n-1}}^{H_n} \circ \cdots \circ Ext_{H_1}^{H_2}(d).
\end{equation}

\begin{rem}
In the above situation, if in \eqref{extension} we take the Hamming metric in $H_1$, the extended metric turns out to be the chain metric of $\C$, i.e.\@ 
$$\tilde d_{\Ham}= d_\C.$$
\end{rem}

\subsection*{Chain isometries}
We now consider isometries between whole chains of groups. 
Let $\mathcal{C}$ and $\mathcal{C}'$ be two chains of subgroups of the same length of groups $G$ and $G$' respectively, say 
$H_1 \subsetneq H_2 \subsetneq \cdots \subsetneq H_n=G$ and  
$H_1' \subsetneq H_2' \subsetneq \cdots  \subsetneq H_n'=G'$.

\begin{defi}
We say that $\mathcal{C}$ is \textit{isometric} to $\mathcal{C}'$, denoted $\C \simeq \C' $, if for every $i=1,\ldots,n$ there are metrics $d_i$ of $H_i$ and $d_i'$ of $H_i'$ such that $(H_i,d_i) \simeq (H_i',d_i')$. 
The groups $G$ and $G'$ are said to be \textit{chain isometric} if they admit isometric chains.
\end{defi}

That is, if two chains $\mathcal{C}$ and $\mathcal{C}'$ are isometric we have 
\begin{equation} \label{diagramchain}
\begin{matrix}
H_1  & \subsetneq & H_2 & \subsetneq & \cdots & \subsetneq & H_n = G  \\[.75mm]
\mid \simeq & &  \mid \simeq & & & & \mid \simeq \\[.75mm]
H_1' & \subsetneq & H_2' & \subsetneq & \cdots & \subsetneq & H_n' = G'  
\end{matrix}
\end{equation}

We now show that chains of the same length and corresponding sizes are isometric.
\begin{lema} 
	Let $G$ and $G'$ be groups with chains of subgroups $\C$ and $\C'$, respectively given by
	$\langle 0 \rangle \ne H=H_1 \subsetneq H_2 \subsetneq \cdots \subsetneq H_n= G$ and $\langle 0 \rangle \ne H'=H_1' \subsetneq H_2' \subsetneq \cdots \subsetneq H_n'= G'$. If $|H_i|=|H_i'|$ for $1\le i \le n$ then we have the chain isometry
	$$(G,d_{\C}) \simeq (G', d_{\C'}).$$ 
\end{lema}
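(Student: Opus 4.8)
The plan is to build the isometry coset-block by coset-block, following the same strategy used in the proof of Theorem~\ref{Isonotrivial} but iterated along the two chains. First I would observe that, by \eqref{enum chain}, the chain metric $d_\C$ is completely determined by the sequence of subgroup sizes $|H_0|,|H_1|,\ldots,|H_n|$: two elements are at distance $i$ precisely when their difference lands in $H_i\smallsetminus H_{i-1}$, and the number of such pairs depends only on $|H_i|-|H_{i-1}|$. Since by hypothesis $|H_i|=|H_i'|$ for every $i$, the two weight enumerators $\mathcal{W}_{(G,d_\C)}$ and $\mathcal{W}_{(G',d_{\C'})}$ agree, which is the necessary combinatorial shadow of what we want to prove.

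The key step is to construct an explicit bijection $\eta:G\to G'$ that sends $H_i$ onto $H_i'$ for each $i$ and that transforms $d_\C$ into $d_{\C'}$. I would do this by induction on the length $n$ of the chain. For $n=1$ both $G=H_1$ and $G'=H_1'$ are groups of the same cardinality, and the chain metric is the Hamming metric on each, so any bijection is an isometry; this is the base case. For the inductive step, suppose we already have an isometry $\eta_{n-1}:(H_{n-1},d_{\C_{n-1}})\to(H_{n-1}',d_{\C_{n-1}'})$ for the truncated chains $\C_{n-1}:H_1\subsetneq\cdots\subsetneq H_{n-1}$ and $\C_{n-1}':H_1'\subsetneq\cdots\subsetneq H_{n-1}'$. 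Since $|H_n|=|H_n'|$ and $|H_{n-1}|=|H_{n-1}'|$, the index $[H_n:H_{n-1}]$ equals $[H_n':H_{n-1}']$; choosing complete sets of coset representatives $T=\{g_1,\ldots,g_k\}$ of $H_{n-1}$ in $G$ and $T'=\{g_1',\ldots,g_k'\}$ of $H_{n-1}'$ in $G'$, and any bijection $\rho:T\to T'$, I would define
\begin{equation*}
\eta(h+g_j)=\eta_{n-1}(h)+\rho(g_j),\qquad h\in H_{n-1},\ g_j\in T,
\end{equation*}
exactly as in \eqref{bij coset}. This is a bijection because every element of $G$ has a unique expression $h+g_j$, and similarly in $G'$.

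It remains to verify that $\eta$ is an isometry. Given $x,y\in G$, write $x=h+g_j$ and $y=h''+g_\ell$. If $g_j=g_\ell$, i.e.\ $x$ and $y$ lie in the same coset of $H_{n-1}$, then $x-y=h-h''\in H_{n-1}$, and $d_\C(x,y)=d_{\C_{n-1}}(h-h'',0)$, which by the inductive hypothesis equals $d_{\C_{n-1}'}(\eta_{n-1}(h)-\eta_{n-1}(h''),0)=d_{\C'}(\eta(x),\eta(y))$; here one uses that $\eta(x)-\eta(y)=\eta_{n-1}(h)-\eta_{n-1}(h'')\in H_{n-1}'$ lies in the same coset, so the chain metric of $\C'$ restricted to $H_{n-1}'$ is just $d_{\C_{n-1}'}$. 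If $g_j\ne g_\ell$, then $x-y\notin H_{n-1}$, hence $x-y\in G\smallsetminus H_{n-1}=H_n\smallsetminus H_{n-1}$, so $d_\C(x,y)=n$; and correspondingly $\eta(x)-\eta(y)$ has $\rho$-part $\rho(g_j)-\rho(g_\ell)\ne 0$, so $\eta(x)-\eta(y)\notin H_{n-1}'$ and $d_{\C'}(\eta(x),\eta(y))=n$. In both cases the distances match, so $\eta$ is the required isometry, and since both $d_\C$ and $d_{\C'}$ are translation invariant by construction (they are chain metrics, hence of the form $w_\C(x-y)$), this establishes the chain isometry $(G,d_\C)\simeq(G',d_{\C'})$.

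I expect the only real subtlety — rather than a genuine obstacle — to be bookkeeping: making sure that the chain metric $d_\C$ restricted to the subgroup $H_{n-1}$ genuinely coincides with the chain metric $d_{\C_{n-1}}$ of the truncated chain (which is immediate from \eqref{chain metric}, since the condition $x-y\in H_i\smallsetminus H_{i-1}$ for $i\le n-1$ is unaffected by truncation), and that the stratification of $G\smallsetminus H_{n-1}$ is a single stratum at distance $n$. Both are direct from Definition~\ref{defi chm}, so there is no hard analytic or algebraic step; the proof is a clean induction whose engine is Theorem~\ref{Isonotrivial}. One could alternatively phrase the whole argument non-inductively by picking, for each $i$, a bijection $H_i\smallsetminus H_{i-1}\to H_i'\smallsetminus H_{i-1}'$ compatible with the group-difference structure, but the inductive coset construction is the cleanest way to guarantee that $\eta$ respects differences, not merely cardinalities of level sets.
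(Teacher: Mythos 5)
Your proof is correct and follows essentially the same route as the paper: the paper starts from the trivial Hamming isometry on $H_1$ and repeatedly lifts it along the chains via Theorem \ref{Isonotrivial} (whose coset construction \eqref{bij coset} is exactly your $\eta$), using that the iterated extended Hamming metric is the chain metric $d_\C$. Your induction simply unpacks that repeated lifting and verifies the identification with $d_\C$, $d_{\C'}$ directly rather than quoting it, which is a fine (slightly more self-contained) presentation of the same argument.
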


\begin{proof}
	Since $|H_1|=|H_1'|$ there is a bijection $\eta : H_1 \rightarrow H_1'$ inducing the trivial isometry 
	$(H_1,d_{\Ham}) \simeq (H_1', d_{\Ham})$. 
	By applying part ($b$) of Theorem  \ref{Isonotrivial} we can lift this isometry to get $(H_2, Ext_{H_1}^{H_2}(d_{\Ham})) \simeq 
	(H_2', Ext_{H_1'}^{H_2'}(d_{\Ham}))$. Repeating this lifting procedure we obtain that $\mathcal{C}$ and $\mathcal{C}'$ are isometric chains with the extended metrics. 
\end{proof}

\begin{exam} \label{ejemplin}
	(\textit{i}) The isometry $\Z_{q^n} \simeq (\Z_q)^n$ given explicitly in Section \ref{Zqn}, 
	can be seen as a chain isometry. In fact,  the chains
$$	\begin{matrix}
	\Z_{q} &\subset & \Z_{q^2}& \subset &\cdots& \subset &\Z_{q^{n-1}}& \subset &\Z_{q^n} \\[2mm]
	\mid   &     & \mid    &   &   &  &\mid &   & \mid \\[2mm]	
    \Z_q   & \subset & \Z_q^2  & \subset & \cdots & \subset &\Z_q^{n-1} &\subset &\Z_q^n 
	\end{matrix}$$
	are isometric by the previous lemma.

	\noindent
	(\textit{ii}) There is a chain isometry $\Z_{q^n} \simeq \ff_q^n$ given by the chains 
	$\Z_q  \subset \Z_{q^2} \subset \cdots \subset \Z_{q^n}$ and $\ff_q \subset \ff_q^2  \subset \cdots \subset \ff_q^{n}$.
	In fact, any bijection between $\ff_q$ and $\Z_q$ with the Hamming metrics induces a chain isometry between $\ff_q^n$ and $\Z_{q^n}$. 
	One can replace $\ff_q$ and $\Z_q$ by any group $C_q$ of order $q$.
\end{exam}

\begin{exam} [\textit{Galois fields and rings}]
Let $p$ be a prime and $r_1, r_2, \ldots, r_n$ be positive integers such that $r_1 \mid r_2 \mid \cdots \mid r_n$. 
Consider the Galois rings $R_i=GR(p^{k},{r_i})$ for $i=1,\ldots, n$.
Then we have the isometric chains of rings
\begin{equation*} 
\begin{matrix}
GR(p^{k},{r_1})  & \subset & GR(p^{k},{r_1})^{\frac{r_2}{r_1}} & \subset & \cdots & \subset & GR(p^{k},{r_1})^{\frac{r_n}{r_1}}  \\[1mm]
\mid  & & \mid  &&&& \mid  \\[1mm]
GR(p^{k},{r_1}) & \subset & GR(p^{k},{r_2}) & \subset & \cdots & \subset & GR(p^{k},{r_n}) 
\end{matrix}
\end{equation*}
and, in particular taking $k=1$, $GR(p,{r_i})\simeq\F_{p^{r_i}}$, so this becomes
\begin{equation*} \label{fields}
\begin{matrix}
\F_{p^{r_1}}  & \subset & (\F_{p^{r_1}})^{\frac{r_2}{r_1}} & \subset & \cdots & \subset &(\F_{p^{r_1}})^{\frac{r_n}{r_1}} \\[1mm]
\mid  & & \mid  &&&& \mid  \\[1mm]
\F_{p^{r_1}} & \subset & \F_{p^{r_2}} & \subset & \cdots & \subset & \F_{p^{r_n}}
\end{matrix}
\end{equation*}
\end{exam}

\subsubsection*{Geometric chains}
Let $\mathcal{C}$ be a chain $\langle 0 \rangle= H_0 \subset H_1 \subset H_2 \subset \cdots \subset H_n=G$ with the sizes of the terms in geometric progression, that is  
\begin{equation} \label{cocientes} 
[H_{i} : H_{i-1}]=m \qquad \text{for } i=1,\ldots,n. 
\end{equation}
We will call this a \textit{geometric chain}.

\begin{prop} \label{prop geom}
If $G$ admits a geometric chain $\C$ of subgroups
$H=H_1 \subsetneq 
\cdots \subsetneq H_n= G$ with $H\ne \langle 0 \rangle$ then we have the isometry
$$(G,d_{\C}) \simeq (H^n, d_{RT}).$$ 
\end{prop}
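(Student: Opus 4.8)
The plan is to reduce to the already-established cyclic case, Theorem \ref{IsoRT}, by working one step of the geometric chain at a time and invoking the extension machinery of Theorem \ref{Isonotrivial}. First I would observe that the condition \eqref{cocientes} says $|H_i| = m^i |H_1| / m = \dots$; more precisely $[G:H] = |H_n|/|H_1| = m^{n-1}$, and in fact $|H_i| = m^{i-1}|H|$. The target space $H^n$ carries a natural geometric chain of its own, namely
\begin{equation*}
\langle 0\rangle \subsetneq H \times \{0\}^{n-1} \subsetneq H^2 \times \{0\}^{n-2} \subsetneq \cdots \subsetneq H^n,
\end{equation*}
whose $RT$-metric $d_{RT}$ is precisely the chain metric of this chain (this is exactly the computation in Remark 7.2(ii), with $\Z_q$ replaced by $H$, which changes nothing in the argument since that computation never used that $H=\Z_q$). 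So it suffices to produce a chain isometry between $\C$ and the standard chain on $H^n$: by the Lemma preceding Example \ref{ejemplin} (the one asserting that two chains of the same length with matching term sizes are chain isometric), it is enough to check that the $i$-th terms have equal cardinality, i.e.\ $|H_i| = |H^i \times \{0\}^{n-i}| = |H|^i$.

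The main obstacle is precisely this cardinality matching: it requires $|H_i| = |H|^i$ for all $i$, which forces $[H_i:H_{i-1}] = |H|$ for each $i$, i.e.\ the ratio $m$ in \eqref{cocientes} must equal $|H_1| = |H|$. This is automatic: $[H_1:H_0] = [H_1:\langle 0\rangle] = |H_1| = |H|$, and by \eqref{cocientes} every ratio $[H_i:H_{i-1}]$ equals this common value $m = |H|$. Hence $|H_i| = |H|^i$ as needed, and the standard chain on $H^n$ has exactly the same sequence of term sizes $|H|, |H|^2, \ldots, |H|^n$. Therefore the Lemma applies and gives a chain isometry, in particular an isometry $(G, d_\C) \simeq (H^n, d')$ where $d'$ is the extended metric $Ext_{\mathcal{C}'}(d_{\Ham})$ on $H^n$; by the Remark following \eqref{extension}, this extended metric is the chain metric of $\C'$, which by Remark 7.2(ii) is $d_{RT}$.

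To write this up cleanly I would: (1) record that \eqref{cocientes} together with $H_0 = \langle 0\rangle$ forces $m = |H|$ and hence $|H_i| = |H|^i$; (2) exhibit the standard geometric chain $\C'$ on $H^n$ and note $|H_i'| = |H|^i$, matching; (3) invoke the Lemma to get the chain isometry $(G,d_\C) \simeq (H^n, d_{\C'})$ with the extended metrics; (4) identify $d_{\C'}$ with $d_{RT}$ via the verbatim-transportable computation of Remark 7.2(ii). Steps (1) and (4) are the only ones with content, and neither is hard; the rest is bookkeeping. One subtlety to flag in the write-up: the Lemma is stated for the chain metrics $d_\C$, $d_{\C'}$ — and by the Remark after \eqref{extension} the chain metric is exactly the iterated Hamming-extension — so there is no gap between "extended metric" and "chain metric."
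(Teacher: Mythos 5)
Your proposal is correct and follows essentially the same route as the paper: both compare $\C$ with the standard chain $H \subset H^2 \subset \cdots \subset H^n$ inside $H^n$, lift the trivial isometry $id:(H,d_{\Ham})\to(H,d_{\Ham})$ step by step via Theorem \ref{Isonotrivial} (which is exactly what the Lemma you invoke packages), and then identify the resulting chain metric on $H^n$ with $d_{RT}$. Your explicit remark that \eqref{cocientes} forces $m=|H|$ and hence $|H_i|=|H|^i$ just makes precise a size-matching point the paper leaves implicit.
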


\begin{proof}
Let $\C$ be the chain $H=H_1 \subsetneq H_2 \subsetneq \cdots \subsetneq H_n= G$ and consider the geometric chain $\mathcal{C}'$ given by $H \subset H^2 \subset H^3 \subset \cdots \subset H^n$. Starting from the trivial isometry 
$id : H \rightarrow H$ with the Hamming metrics and applying Theorem  \ref{Isonotrivial}, 
we get that $\mathcal{C}$ and $\mathcal{C}'$ are isometric chains. In particular, $G\simeq H^n$ and 
$$(\widetilde{d}_{\Ham})^n = d_{\mathcal{C}'} = d_{RT}$$
as we wanted to see. 
\end{proof}

\begin{rem}
The isometries $(\Z_{q^n}, d_q) \simeq (\Z_q^n, d_{RT})$ and $(\Z_{q^n}, d_q) \simeq (\ff_q^n, d_{RT})$ given in Example \ref{ejemplin} are instances of geometric chains and of chain isometries given by the previous proposition. 
\end{rem}

We now show that the result in Theorem \ref{IsoRT}, i.e.\@ that $(\Z_{q^n},d_q) \simeq (\Z_q^n, d_{RT})$, can be generalized to any pair of groups $G$ and $H^n$ of order $q^n$, 
with $G$ and $H$ not necessarily cyclic. 

\begin{thm}
Let $q$ be a prime power and $G, H$ groups with $|G|=q^n$ and $|H|=q$. Then, 
\begin{equation} \label{G iso Hn} 
(G, d_\mathcal{C}) \simeq (H^n, d_{RT}),
\end{equation}
where $d_\mathcal{C}$ is the chain metric associated to some geometric chain of length $n$. 
\end{thm}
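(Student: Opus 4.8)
The plan is to reduce the statement to Proposition \ref{prop geom} by producing a geometric chain of subgroups of $G$ with step index $q$ and initial term a subgroup of order $q$. Since $|G| = q^n$ with $q$ a prime power, write $q = p^k$ with $p$ prime, so that $|G| = p^{kn}$ and $G$ is a $p$-group. The key structural input is the classical fact that a finite $p$-group admits a chief (or composition) series, i.e.\@ a chain $\langle 0 \rangle = N_0 \subsetneq N_1 \subsetneq \cdots \subsetneq N_{kn} = G$ with $[N_{j}:N_{j-1}] = p$ for every $j$; this follows from repeatedly using that a nontrivial $p$-group has nontrivial center, hence a central (in particular normal) subgroup of order $p$, and then inducting on the quotient. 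From this length-$kn$ chain we extract the subchain $\mathcal C : H_1 \subsetneq H_2 \subsetneq \cdots \subsetneq H_n = G$ by setting $H_i = N_{ki}$, so that $[H_i : H_{i-1}] = p^k = q$ for all $i$ and $|H_1| = q$. This is a geometric chain in the sense of \eqref{cocientes} with ratio $m = q$, and its initial term $H = H_1$ satisfies $H \ne \langle 0 \rangle$ and $|H| = q = |H|$ in the hypothesis (after identifying the abstract group of order $q$, see below).

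Next I would invoke Proposition \ref{prop geom} directly: since $G$ admits the geometric chain $\mathcal C$ with first term $H_1$ of order $q$, we obtain the isometry $(G, d_{\mathcal C}) \simeq (H_1^n, d_{RT})$, where $d_{\mathcal C}$ is the chain metric of $\mathcal C$ and the $RT$-metric on $H_1^n$ is the extended/diagonal chain metric as in the proof of that proposition. Finally, to replace $H_1$ by the given group $H$ of order $q$, I would note that $|H_1| = q = |H|$, so any bijection $H_1 \to H$ is a trivial isometry $(H_1, d_{\Ham}) \simeq (H, d_{\Ham})$; applying this coordinatewise gives an isometry $(H_1^n, d_{RT}) \simeq (H^n, d_{RT})$ (the $RT$-metric only depends on the positions of nonzero coordinates, not on the group structure of the alphabet, exactly as remarked after \eqref{RT metric} where $d_{RT}$ is defined over $G^n$ for any group $G$). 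Composing the two isometries yields \eqref{G iso Hn}.

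The only genuine content beyond bookkeeping is the existence of the length-$kn$ subnormal chain with all indices equal to $p$; for $p$-groups this is standard, but it is worth stating it cleanly. If one wanted the chain terms $H_i$ themselves to be \emph{normal} in $G$ (not just subnormal), one uses a chief series of $G$, which for a $p$-group also has all factors of order $p$; either version suffices for the construction of $d_{\mathcal C}$, since Definition \ref{defi chm} and the chain-metric construction only require a chain of subgroups, not normality. I expect the main (minor) obstacle to be making sure the geometric-chain hypothesis of Proposition \ref{prop geom} is met on the nose, i.e.\@ that the extracted subchain really has constant index $q$ and nontrivial bottom term; once that is in place the theorem is immediate. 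One could also remark that the hypothesis that $q$ is a prime power is exactly what guarantees $G$ is nilpotent and hence has such a chain — for general $q$ (e.g.\@ $q$ not a prime power) a group of order $q^n$ need not have a subgroup of every order dividing $q^n$, so the argument genuinely uses this assumption.
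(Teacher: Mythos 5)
Your proposal is correct and follows essentially the same route as the paper: produce a geometric chain of length $n$ with step index $q$, apply Proposition \ref{prop geom} to get $(G,d_\C)\simeq(H_1^n,d_{RT})$, and then transfer the alphabet via an arbitrary bijection $H_1\to H$, which preserves $d_{RT}$ coordinatewise. The only difference is that you justify the existence of the geometric chain explicitly (composition series of the $p$-group with all indices $p$, then taking every $k$-th term), where the paper simply invokes Sylow's theorems; this is a harmless, indeed slightly more careful, filling-in of the same step.
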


\begin{proof}
Since $|G|=q^n$, by Sylow's theorems we get that $G$ has a geometric chain $\mathcal{C}$ of length $n$, say 
$\langle 0 \rangle \subset H_1 \subset \cdots \subset H_n= G$. By Proposition \ref{prop geom} we have that
$$(G, d_\mathcal{C}) \simeq (H_1^n, d_{RT}).$$
On the other hand, since $|H_1|=|H|$ there is a bijection $\tau : H_1 \rightarrow H$ which extends to $\tau : H_1^n \rightarrow H^n$ and induces the isometry 
$$(H_1^n,d_{RT}) \simeq (H^n,d_{RT}).$$ 
In fact,
$$d_{RT}(\tau(x), \tau(y)) = \max_{1\le i \le n} \{ i: \tau(x_i) \ne \tau(y_i) \} = 
\max_{1\le i \le n} \{ i: x_i \ne y_i \} = d_{RT}(x, y).$$
This implies the result.
\end{proof}

The theorem implies, for instance, that there exists a metric $d$ in the generalized quaternion group $\Q_{2^{n}}$ of order $2^n$ and a metric $d'$ in the dihedral group $\D_{2^{n-1}}$ of order $2^{n}$ such that
$$ (\Z_{2^{n}},d_2) \simeq  (\Q_{2^{n}},d) \simeq (\D_{2^{n-1}},d') \simeq (\Z_2^{n},d_{RT}).$$
Also, in the above list one can add all the groups $\Z_{2^i} \times \Z_{2^{n-i}}$ with some metrics $d_{(i)}$ for $i=1,\ldots, n-1$.

\section{Block Rosenbloom--Tsfasman metric}
We will next extend the result for geometric chains given in the previous section for groups with arbitrary chains. 
For this, we must first consider a generalization of the $RT$-metric. 

\begin{defi} \label{brt}
Let $X$ be a group and $n\in \N$. Given a partition $n= m_1 + \cdots + m_r$ consider $X^n = X^{m_1} \times \cdots \times  
X^{m_r}$. We write $x=(\tilde x_1,\ldots,\tilde x_r)$ for an element in $X^n$, where $\tilde x_i \in X^{m_i}$ for any $i$.
We define the \textit{block Rosenbloom--Tsfasman} metric (or $BRT$-\textit{metric}) on $X^n$ as 
\begin{equation} \label{BRT metric}
d_{BRT} (x,y) = \max_{1\le i \le r} \{i : \tilde x_i \ne \tilde y_i \}.
\end{equation}  
\end{defi}
Note that for $r=n$, then $m_1=\cdots =m_n=1$ and hence the $BRT$-metric is just the $RT$-metric. 
Also, notice that this metric can be seen as the \textsl{block poset metric} 
(see \cite{BlockPoset}) associated to the chain poset $1 \preceq 2 \preceq \cdots \preceq  r$.

\begin{thm} \label{prop no geo}
Let $H$ be a proper subgroup of a group $G$ and $\mathcal{C}$ a chain of subgroups with initial term $H$. 
Then we have 
\begin{equation} \label{index chain}
(G,d_\mathcal{C}) \simeq (H^{[G:H]}, d_{BRT}).
\end{equation} 
\end{thm}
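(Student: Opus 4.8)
The plan is to imitate the proof of Proposition~\ref{prop geom}, replacing the chain $H\subset H^2\subset\cdots\subset H^n$ used there by a \emph{blocked} chain on $H^{[G:H]}$ whose associated chain metric is exactly the block $RT$-metric. Write $\mathcal{C}$ as $H=H_1\subsetneq H_2\subsetneq\cdots\subsetneq H_n=G$ and put $m_i=[H_i:H_{i-1}]$ for $2\le i\le n$, so $[G:H]=m_2\cdots m_n$. On the target side I would take $\mathcal{C}'$ to be the prefix chain of subgroups of $H^{[G:H]}$
$$ \langle 0\rangle \subsetneq H^{b_1}\times\{0\} \subsetneq H^{b_1+b_2}\times\{0\} \subsetneq \cdots \subsetneq H^{b_1+\cdots+b_n}=H^{[G:H]}, $$
for a partition $[G:H]=b_1+\cdots+b_n$ still to be chosen, where $H^{b_1+\cdots+b_i}\times\{0\}$ sits in the first $b_1+\cdots+b_i$ coordinates. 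By Definition~\ref{brt}, the chain metric $d_{\mathcal{C}'}$ is precisely the block $RT$-metric for this partition, and by the remark following \eqref{extension} it equals the iterated extension $Ext_{\mathcal{C}'}(d_{\Ham})$; likewise $d_{\mathcal{C}}=Ext_{\mathcal{C}}(d_{\Ham})$.

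Next comes the lifting. Choosing the $b_i$ so that $|H^{b_1+\cdots+b_i}|=|H_i|$ for every $i$ — equivalently $b_1+\cdots+b_i=\log_{|H|}|H_i|$, so that $b_1=1$ and $b_i=\log_{|H|}m_i$ — we start from the trivial isometry $(H_1,d_{\Ham})\simeq(H^{b_1},d_{\Ham})$ and apply Theorem~\ref{Isonotrivial} repeatedly to lift it up the two chains, obtaining $(H_i,\widetilde d_{\Ham})\simeq(H^{b_1+\cdots+b_i},\widetilde d_{\Ham})$ at each stage. Hence $\mathcal{C}$ and $\mathcal{C}'$ are chain isometric in the sense of \eqref{diagramchain}, and the top isometry reads
$$ (G,d_{\mathcal{C}})=(H_n,\widetilde d_{\Ham})\;\simeq\;(H^{b_1+\cdots+b_n},\widetilde d_{\Ham})=(H^{[G:H]},d_{BRT}), $$
which is \eqref{index chain}. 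An alternative route is to refine $\mathcal{C}$ to a chain $\mathcal{C}^{\mathrm{ref}}$ all of whose successive indices equal $|H|$, apply Proposition~\ref{prop geom} to get $(G,d_{\mathcal{C}^{\mathrm{ref}}})\simeq(H^{[G:H]},d_{RT})$, and observe that coarsening $\mathcal{C}^{\mathrm{ref}}$ back to $\mathcal{C}$ amounts, on the product side, to grouping the $RT$-coordinates into the intervals dictated by $\mathcal{C}$, which replaces $d_{RT}$ by $d_{BRT}$.

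The delicate point — and where the precise hypotheses on $G$, $H$ and $\mathcal{C}$ enter — is exactly this cardinality bookkeeping: one needs the block sizes $b_i=\log_{|H|}[H_i:H_{i-1}]$ to be genuine positive integers and to sum to the ambient exponent, so that every index of $\mathcal{C}'$ matches the corresponding index of $\mathcal{C}$ and Theorem~\ref{Isonotrivial} is applicable at each step; one must also confirm that the iterated extension $Ext_{\mathcal{C}'}(d_{\Ham})$ is the block $RT$-metric for \emph{that} partition rather than some other chain metric. Granting these, the metric axioms for $d_{\mathcal{C}}$ and $d_{BRT}$, their translation invariance, and the identification $d_{\mathcal{C}}=Ext_{\mathcal{C}}(d_{\Ham})$ are all already in hand from Definition~\ref{defi chm}, Theorem~\ref{Isonotrivial} and the discussion around \eqref{extension}, so only the two bookkeeping verifications remain.
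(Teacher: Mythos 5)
Your strategy is the same as the paper's: build a parallel ``prefix'' chain $\mathcal{C}'$ inside the product group, lift the trivial isometry of the initial terms step by step using Theorem \ref{Isonotrivial}, and then check that the resulting chain metric $d_{\mathcal{C}'}$ is the block $RT$-metric for the partition dictated by $\mathcal{C}$ (that last verification is exactly the paper's closing ``one can check'' step, and your prefix-chain version of it is fine). The only substantive difference is the choice of the parallel chain: the paper takes $H_i' = H^{[H_i:H_1]}$ and asserts $|H_i'| = |H_i|$, whereas you choose the exponents so that the cardinalities match by construction, i.e.\ $H_i' = H^{s_i}$ with $s_i=b_1+\cdots+b_i=\log_{|H|}|H_i|$.

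But the ``bookkeeping'' you defer is not a formality, and it cannot be completed from the stated hypotheses. Your blocks $b_i=\log_{|H|}[H_i:H_{i-1}]$ are integers only if every index $[H_i:H_{i-1}]$ is a power of $|H|$, and even then $b_1+\cdots+b_n=\log_{|H|}|G|$, which equals $[G:H]$ essentially never: already for $G=\Z_8$, $H=\Z_2$ one has $|H^{[G:H]}|=16\neq 8=|G|$, so no isometry with $H^{[G:H]}$ can exist at all. Hence your final identification $H^{b_1+\cdots+b_n}=H^{[G:H]}$ fails; the natural target is $H^{\log_{|H|}|G|}$ with the blocks $b_i$, which is also what the paper's own example after the theorem actually uses, namely $(\Z_{q^n},d_\C)\simeq(\F_q^{\,n},d_{BRT})$ rather than $\F_q^{\,q^{n-1}}$. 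The same difficulty sits inside the paper's proof, where the claim $|H^{[H_i:H_1]}|=|H_i|$ is false in general, so you have not missed an idea that the paper supplies --- on the contrary, you isolated precisely the point it glosses over --- but to get a complete proof you must add the hypothesis that all indices $[H_i:H_{i-1}]$ are powers of $|H|$ and state the conclusion with the corrected exponent. Your alternative route, refining $\mathcal{C}$ to a chain with all successive indices equal to $|H|$ and invoking Proposition \ref{prop geom}, needs exactly the same hypothesis for such a refinement to exist.
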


\begin{proof}
Suppose $\mathcal{C}$ is the chain $H_1 = H \subset H_2 \subset \cdots \subset H_n =G$.
Consider the group $G'= H^{[G:H]}$. 
We will construct a chain $\mathcal{C'}$ in $G'$ of length $n$, 
say $H_1'=H \subset H_2' \subset \cdots \subset H_n' =G'$, such that 
$|H_i'| = |H_i|$ for all $i=1,\ldots,n$.
Consider $H_2'=H^{[H_2:H_1]}$, 
$$H_3'= (H_2')^{[H_2:H_1]} = (H^{[H_2:H_1]})^{[H_3:H_2]} = H^{[H_3:H_1]}$$ 
and in general for every $1\le i \le n$ take
$$H_i' = H^{[H_i:H_1]}.$$ 
It is clear that $|H_i'|=|H_i|$ for $i=1,\ldots, n$. 

By Theorem \ref{Isonotrivial}, the trivial isometry $\varphi_1=id: (H_1,d_{\Ham}) \rightarrow (H_1',d_{\Ham})$ can be lifted to an isometry 
$$\varphi_2: (H_2, Ext_{H_1}^{H_2}(d_{\Ham})) \rightarrow (H_2', Ext_{H_1'}^{H_2'}(d_{\Ham})).$$ 
By iterating this process we arrive at an isometry 
$$\varphi_n : (H_n, Ext_{H_{n-1}}^{H_n} \circ \cdots \circ Ext_{H_1}^{H_2}(d_{\Ham})) \rightarrow (H_n', Ext_{H_{n-1}'}^{H_n'} \circ \cdots \circ  Ext_{H_1'}^{H_2'}(d_{\Ham})).$$ 
That is, we have 
$$(G, d_\mathcal{C}) \simeq (H^{[G:H]}, d_{\mathcal{C}'}).$$ 

It only remains to show that the chain metric $d_{\mathcal{C}'}$ is the $BRT$-metric.
Put $r=[G:H]$ and $r_i=[H_i:H_{i-1}]$ for $i=1,\ldots,n$ (where $H_{-1}=\langle 0 \rangle$). 
Consider the natural decomposition
$H^r = H^{r_1}  
\times \cdots \times H^{r_n}$. 
If $x\in H^r$ then $x=(\tilde x_1, \ldots, \tilde x_n)$ with $\tilde x_i \in H^{r_i}$ for any $i$.
Since $$d_{BRT}(x,y) = \max_{1\le i \le n} \{ i : \tilde x_i \ne \tilde y_i\}$$ 
one can check that for $i=1, \ldots,n$ we have
$$d_{BRT}(x,y) =i \qquad \Leftrightarrow \qquad d_{\mathcal{C}'}(x,y) =i .$$
Hence the metrics coincide and the result thus follows.
\end{proof}

\begin{exam}
Let $G=\Z_{q^n}$ and $H=\F_q$, $n\geq 2$. By the previous theorem, if we take the chains
$$\C: \quad \langle 0 \rangle \subset \Z_q \subset \Z_{q^n} \qquad \text{ and } \qquad \C' :\quad  \langle 0 \rangle \subset \F_q \subset \F_q^n$$ 
and we consider the decomposition $\F_q^n = \F_q \times \F_q^{n-1}$ we get 
$$(\Z_{q^n}, d_\C) \simeq (\F_q^n, d_{BRT}).$$
Note that the weight function associated to $\C$ is 
$$w_\C = \begin{cases}
0 & \qquad \text{ if } x=0, \\
1 & \qquad \text{ if } x \in q^{n-1}\Z_{q^n}\smallsetminus \{0\}, \\
2 & \qquad \text{ if } x \in \Z_{q^n} \smallsetminus q^{n-1}\Z_{q^n}. 
\end{cases}$$
Now, by properly rescaling this weight, we get the following, 
$$\tilde w_{\C}  = \begin{cases}
0 & \qquad \text{ if } x=0, \\
q^{n-1} & \qquad \text{ if } x \in q^{n-1}\Z_{q^n}\smallsetminus \{0\}, \\
q^{n-2}(q-1) & \qquad \text{ if } x \in \Z_{q^n} \smallsetminus q^{n-1}\Z_{q^n}.
\end{cases}$$
Thus, we obtain
\begin{equation} \label{iso hom brt}
(\Z_{q^n}, \tilde d_{\C}) \simeq (\F_q^n, \tilde d_{BRT}),
\end{equation}
where $\tilde d_{BRT}$ is a rescaled metric obtained from $d_{BRT}$. It is easy to check that 
the rescaled metrics are also metrics. 
In the case when $q=p$ is prime, the metric $\tilde d_{\C}$ coincides with the \textsl{homogeneous metric} (see \cite{Hom}) defined over the ring $\Z_p^{n}$,
\begin{equation}
(\Z_{p^n}, d_{\scriptscriptstyle Hom}) \simeq (\F_p^n, \tilde d_{BRT}) .
\end{equation}
\end{exam}

\begin{rem}
As in the previous example, we have the isometry $(\Z_{q^n}, d_{Hom}) \simeq (\F_q^n, \tilde d_{BRT})$ for $q=p^{r}$.
Consider the $q$-ary first order Reed-Muller code $RM(1,q^{n-1})$ and let $G$ be any generating matrix of the code whose first row is the all ones vector $(1,1,\ldots,1)$.  The code $RM(1,q^{n-1})$ lies in $\F_q^{q^{n-1}}$ with the Hamming metric, and right multiplication by $G$
encodes the space $\F_q^{n}$ into $RM(1,q^{n-1})$, that is $RM(1,q^{n-1}) = \{xG : x \in \F_q^{n} \}$. Putting these things together we get
\begin{equation} \label{embedding RM}
 (\F_q^n, \tilde d_{BRT}) \rightarrow (RM(1,q^{n-1}), d_{\Ham}^{\,q^{n-1}}) \hookrightarrow 
 (\F_q^{q^{n-1}}, d_{\Ham}^{\,q^{n-1}}).
\end{equation}
Combining the isometry \eqref{iso hom brt} with the embedding \eqref{embedding RM} we get the isometric embedding 
 \begin{equation*} 
 (\Z_{q^n}, \tilde d_{\C}) \hookrightarrow (\F_q^{q^{n-1}}, d_{\Ham}^{\,q^{n-1}}).
 \end{equation*}
Taking $q=p$ prime, we obtain the following result of Greferath (\cite{Greferath})
 \begin{equation} \label{emb hom brt}
 (\Z_{p^n}, d_{\scriptscriptstyle Hom}) \hookrightarrow (\Z_p^{p^{n-1}}, d_{\Ham}^{\,p^{n-1}}).
 \end{equation}
In this way, similarly as in Section \ref{Isometric embeddings}, we get isometric embeddings of the form
\[
\Z_{p^n} \hookrightarrow (\F_{p^{i}}^{p^{n-i}}, d_{\Ham}^{\,p^{n-i}})
\]
for $i=1,\ldots,n$.
\end{rem}

\end{document}